\documentclass{article}
\usepackage{graphicx,amsthm,epsfig,xy,aliascnt,latexsym,float} 
\usepackage{stmaryrd,amsmath}
\usepackage{amsfonts,amssymb,mathrsfs}

\newtheorem{definition}{Definition}

\newtheorem{lemma}{Lemma}
\newtheorem{theorem}{Theorem}

\oddsidemargin0cm
\topmargin-2cm
\textwidth16.5cm
\textheight23.5cm
\frenchspacing

\newcommand{\comment}[1]              {}

\newcommand{\paren}[1]        {\left( #1 \right)}

\newcommand{\ang}[1]         {\langle #1 \rangle}

\newcommand{\mat}[1] {\begin{pmatrix} #1 \end{pmatrix}}



\newcommand         {\n}        {\not}
\newcommand         {\nd}       {\noindent}

\newcommand         {\wt}       {\widetilde}


\newcommand         {\mr}       {\mathrm}

\newcommand         {\mc}       {\mathcal}


\newcommand         {\g}        {\gamma}

\renewcommand       {\l}        {\lambda}

\newcommand         {\0}        {\varnothing}

\newcommand         {\N}        {\mathbb N}
\newcommand         {\Q}        {\mathbb Q}
\newcommand         {\R}        {\mathbb R}

\newcommand         {\C}        {\mathbb C}



\DeclareMathOperator{\y}        {y}



\newcommand         {\inter}    {\cap}
\newcommand         {\Union}    {\bigcup}

\newcommand         {\less}     {\!\setminus\!}


\renewcommand       {\d}        {\partial}


\renewcommand       {\-}        {^{-1}}

\newcommand         {\x}        {\times}

\DeclareMathOperator{\lcm}      {lcm }

\DeclareMathOperator{\tr}       {tr }


\DeclareMathOperator{\MCG}      {MCG}

\DeclareMathOperator{\PSL}      {PSL}


\newcommand         {\inc}      {\hookrightarrow}

\newcommand         {\iso}      {\cong}
\newcommand         {\homeo}    {\approx}

\pdfoutput=1
\input xy
\xyoption{all}

\setlength{\headsep}{1.6cm}
\setlength{\evensidemargin}{.7cm}

\author{Jeffrey D. Carlson}
\title{Commensurability of two-multitwist pseudo-Anosovs}
\date{\today}

\bibliographystyle{alpha}

\begin{document}

\maketitle

\begin{abstract}
This paper analyzes commensurability of the class of surface automorphism 
generated by two Dehn multitwists. We show pairwise noncommensurability 
between several classes arising from canonical curve configurations. 
In addition, we consider the Kenyon--Smillie 
invariant $J$ of flat surfaces in this setting. 
We also introduce a general construction of infinite 
classes of commensurable pseudo-Anosov homeomorphisms.
\end{abstract}

\section{Introduction}

Danny Calegari, Hongbin Sun, and Shicheng Wang, in \cite{CSW},
introduced a relation called \emph{commensurability} among surface 
automorphisms. 
An mapping class $\wt \phi \in \MCG(\wt S)$ \emph{covers} 
$\phi \in \MCG(S)$ if
we have a covering $p\colon \wt S \to S$ such that 
$p \wt \phi = \phi p$,
and two mapping classes $\phi_1 \in \MCG(S_1)$ and $\phi_2 \in \MCG(S_2)$ 
are said to be \emph{commensurable} if there are nonzero numbers $m,n \in \mathbb{Z}$
such that $\phi_1^m$ and $\phi_2^n$ are both covered by some map $\wt \phi$. 
In Section 2, we deal with generalities about this relation.

In \cite{Thur}, Bill Thurston introduces a construction for a class of 
pseudo-Anosovs we call {\em two-multiwist pseudo-Anosovs}. 
He associates to each pair of filling multicurves in a surface 
the subgroup of the mapping class group generated by Dehn twists 
around the multicurves, and shows that ``most of'' the elements of 
this group are pseudo-Anosov. This construction 
is appealing in its simplicity and allows some invariants of pseudo-Anosovs 
to be calculated more easily than in the general case. 
We review this construction in Section 3.

In the following Section 4, we review an invariant $J$ of flat structures introduced 
by Richard Kenyon and John Smillie in \cite{KS}. Since the Thurston construction
allows us to determine a flat structure preserved by a collection of 
pseudo-Anosovs, within this class of pseudo-Anosovs
$\R^+ J$ gives a useful commensurability invariant.

Chris Leininger, in \cite{Lein}, studies the Thurston construction 
to understand for which Teichm\"{u}ller curves 
the associated stabilizers contain with finite index 
a group generated by two positive multi-twists. 
In Section 5 we look at the invariants of some of these groups 
determining for some pairs $G,H$ of groups that
no element of $G$ is commensurable with an element of $H$.

Finally, in Section 6, we introduce a simple construction 
that produces many commensurabilities between different elements of different 
two-multitwist groups. The idea is to find covers that either lift 
or uniformly break multicurves, and to consider Dehn twists in the covers.

\subsection{Acknowledgements}
The author would like to thank his advisor 
Genevieve Walsh for asking the question that lead to this paper,
for endless patience in listening to him discuss it, 
and for reading over early versions of it.
The author would also like to thank Dan Margalit for leading him to the paper 
\cite{Lein}, and Curt McMullen for leading him to the paper \cite{KS}. 
Finally the author thanks Aaron W. Brown for his discussions about dynamics and
about $J$.

\section{Generalities}\label{begin}

If $S$ is a compact surface, we will refer to the isotopy class $\phi = [f]$ 
of a homeomorphism $f$ of $S$ as an \emph{automorphism} of $S$. 
We will also refer to an automorphism by the pair $(S,\phi)$.

\begin{definition}
An automorphism $(\wt S,\wt \phi)$ {\em covers} $(S,\phi)$ 
if there are a finite covering $p\colon \wt S \to S$ and $\wt \phi$ and $\phi$ 
have representatives $\wt f$ and $f$ such that $p \circ \wt f = f \circ p 
\colon \wt S \to S$.
We then write $p \colon \wt \phi \to \phi$ or $p_\#(\wt \phi) = \phi$.
\end{definition}

\begin{definition}
Automorphisms  $(S_1,\phi_1)$ and $(S_2,\phi_2)$ are {\em commensurable}
if there are an automorphism $(\wt S,\wt \phi)$ 
and nonzero integers $m,n \in \mathbb{Z}$ such that $(\wt S, \wt \phi)$ covers
$(S_1, \phi_1^m)$ and $(S_2,\phi_2^n)$. 
If $|m| = |n| = 1$ we say $(S_1,\phi_1)$ and $(S_2,\phi_2)$ are 
{\em topologically commensurable}
and if $\wt S = S_1 = S_2$ 
we say they are {\em dynamically commensurable}.
\end{definition}

The relation of commensurability is of interest to 3-manifold theorists because 
if $(S_1,\phi_1)$ and $(S_2,\phi_2)$ are commensurable, 
the mapping tori associated to these automorphisms are 3-manifolds that 
admit a common finite-sheeted cover, 
that is, are commensurable as 3-manifolds \cite{Walsh}. 
The especial interest of the pseudo-Anosov 
case is that the interior of the mapping torus of $(S,\phi)$ is a complete 
hyperbolic 3-manifold that fibers over $S^1$, 
by work of Thurston \cite{Otal}

If we have a covering $p\colon \wt S \to S$, 
the set of elements of $\MCG(\wt S)$ that cover an element of $\MCG(S)$ 
under $p$ is a subgroup $H_p$, 
and the covering relation is a homomorphism 
$p_\#\colon H_p \to \MCG(S)$. 
This is because the 
covering relation $p_\#$ distributes over composition, 
in the sense that if 
$p\colon \wt \phi \to \phi$ and $p\colon \wt \psi \to \psi$, then
\[p (\wt \phi \wt \psi) = \phi p \wt \psi = \phi \psi p,\]
i.e., $\phi \psi = p_\#(\wt \phi \wt \psi)$.
Thus if we have two covering maps $p_j \colon \wt S \to S_j$, 
the set of elements of $\MCG(\wt S)$ that cover both 
through $p_1$ elements of $S_1$ and
through $p_2$ elements of $S_2$ is a subgroup $H$, 
and elements of $(p_1)_\# H$ are each commensurable with some element of 
$(p_2)_\#(H)$.

Associated with each $(S,\phi)$, where $\phi$ is pseudo-Anosov, 
are a number of 
invariants that are useful in detecting commensurability, as given in \cite{CSW}:

\begin{enumerate}
\item whether or not $\d S = \0$; 
\item the commensurability class of the mapping torus, 
\item the commensurability class in $\R$ of $\log(\l)$, where 
$\l$ is the expansion factor of the
pseudo-Anosov homeomorphism $f \in \phi$; 
\item the set of orders of the singular points of the invariant foliations of $f$.
\end{enumerate}

The set of orders of singular points in the invariant foliations of $f$ can 
be more helpfully viewed as an infinite vector.

\begin{definition}
Define $\delta_n(f)$ to be the number of $n$-prong singularities 
in the invariant foliation associated to a pseudo-Anosov $f$, 
and write $\delta(f) = (\delta_n(f))_{n \in \N^+ \less \{2\}}$ 
for the infinite-length vector describing the \emph{singularity data} for that foliation. 
(The case $n=2$ is excluded because a 2-prong is not a singularity.)
\end{definition}
In \cite{CSW} it is shown that the rational commensurability class of 
$\delta(f)$ is a commensurability invariant of the mapping class $\phi$, 
so that a pair of pseudo-Anosovs $\phi$ and $\psi$ can be commensurable 
only if there is some rational number $q \in \Q^+$ such that 
$q\delta(\phi) = \delta(\psi)$; that is, 
$q\delta_n(\phi) = \delta_n(\psi)$ for each $n$. 

\section{Thurston's two-multitwist construction of pseudo-Anosovs}\label{multitwist}

Let $a$ and $b$ be multicurves on a surface $S$, 
and write $T_a,T_b$ for the mapping classes 
of the Dehn multitwists around $a$ and $b$. 
Write $G(a,b)$ for the subgroup $\ang{T_a,T_b}$ 
of the mapping class group $\MCG(S)$.
We now, following \cite{Thur}, 
create a flat structure on $S$ depending on $a$ and $b$. 
To do this, we first create the {\em dual cell decomposition} $\Sigma(a,b)$ 
of $S$ into rectangular cells, 
dual to the decomposition of $S$ determined by $a \cup b$;
we then assign lengths to each rectangle in such a way that $T_a$ and $T_b$ 
act by affine transformations.

To get $\Sigma(a,b)$, draw a small rectangle in the surface 
at each intersection of an 
$a$ curve and a $b$ curve, with sides transverse to the curves. 
Expand these rectangles until the surface is covered.
After expansion, a rectangle at an intersection point $p$ of $a_j$ and $b_k$ 
will share a side with just the rectangles drawn around intersection points 
on $a_j$ and on $b_k$ that are adjacent to $p$. 
Thus rectangles are ultimately glued if they lie along the same curve 
and there are no intervening rectangles between them on the same curve.
This is called the \emph{dual cell decomposition} $\Sigma = \Sigma(a,b)$ to that 
determined by $a \cup b$, and if the cell decomposition of $S$ determined 
by $a \cup b$ has $V$ vertices, $E$ edges, and $F$ faces, 
the dual decomposition has $F$ vertices, $E$ edges, and $V$ faces. 
For example, 
if $a \cup b$ does not separate $S$, 
the cell decomposition has one face, 
so the dual cell decomposition will have one vertex.
In general, one can show that 
if a face in the cell decomposition determined by $a \cup b$ 
is bounded by $2n$ edges, the corresponding vertex in the dual cell decomposition 
will be an $n$-prong if the squares are foliated by lines parallel to the $a$ curves.
The natural product foliations of the Euclidean rectangles induce 
a pair of transverse singular foliations on $S$; 
the singularities are those vertices of the dual cell decomposition 
that are the result of identifying $2n \neq 4$ rectangle corners. 
Call $\delta(a,b)$ the infinite vector whose $n$th entry is the number of 
$n$-prongs of either foliation
The entries of $\delta(a,b)$ are intimately related with the genus $g$ of $S$, 
by the Hopf index theorem: 
if $S$ is closed, we have $4(g-1) = \sum_{n} (n-2) \delta_n$.
Thus if $a \cup b$ does not separate $S$, we have 
$4g - 4 = n - 2$, so $\delta = e_{4g-2}$; 
that is, the single singularity of the foliation 
is a $(4g-2)$-prong. 

\bigskip
\begin{figure}
{
\includegraphics{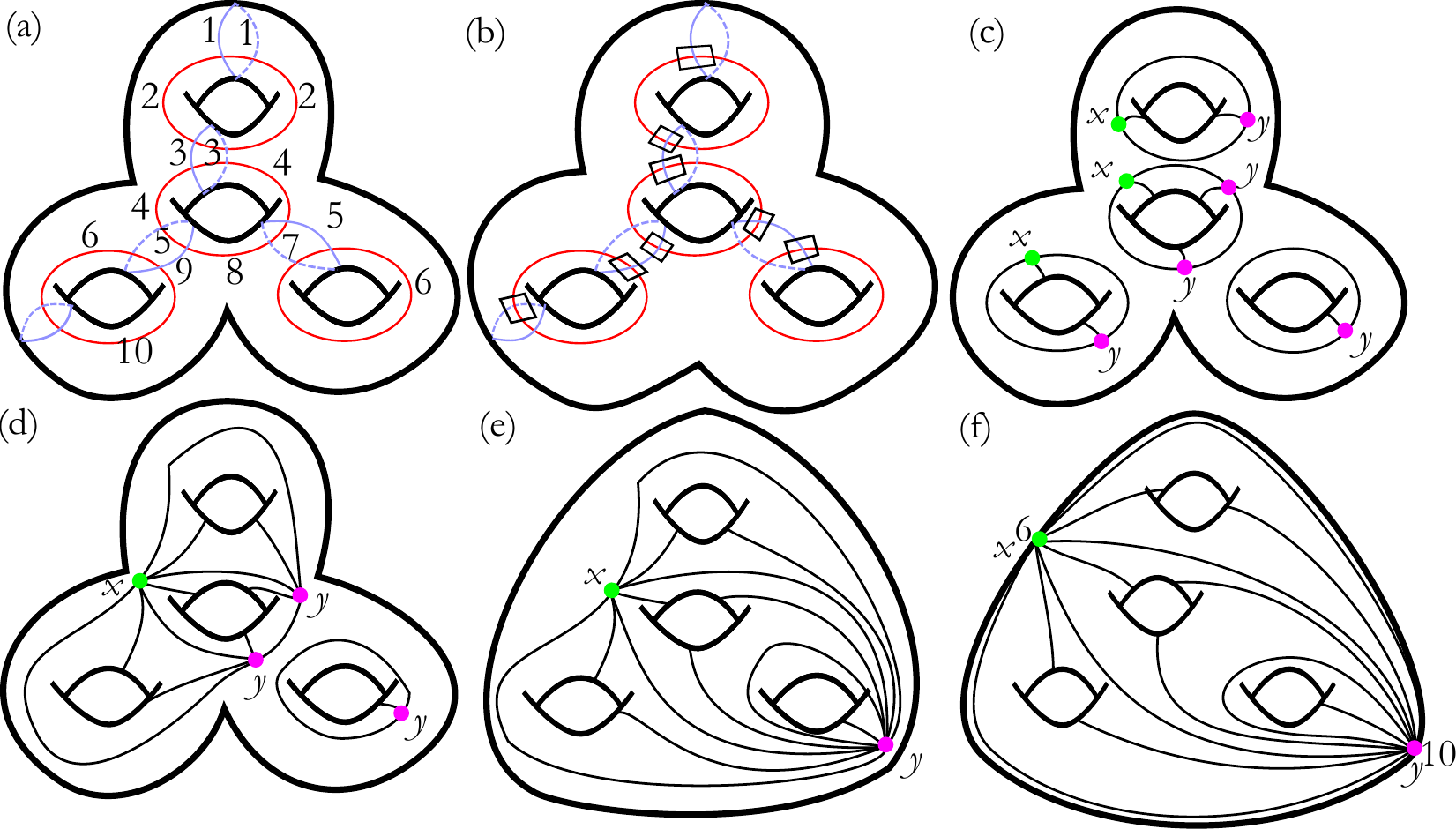}
}
\caption{Drawing a dual cell decomposition}
\end{figure}

To illustrate the formation of the dual cell decomposition, we do an example. 
In part (a) of the figure on the above, 
we have drawn a multicurve configuration $a,b$ filling a surface $S$. 
We see that $S \less (a \cup b)$ has two components. If we count the number 
of edges bounding the left region, 
we see there are 12 (six in front, and another six on the side we can't see) 
and 20 edges bounding the right one. 
In (b), we draw squares, 
which we will grow to be faces of the dual decomposition $\Sigma(a,b)$, 
centered at the vertices $a \cap b$. 
Now there are six squares along the edges bounding the left region of 
the original cell decomposition. 
Altogether, they have 24 corners, 12 of which lie in the left region, and 12 in the right. 
Since the dual decomposition is to have one vertex corresponding to the left 
region, when we expand the squares to fill the surface, the left vertex 
will have a total angle of $12(\pi/2) = 6\pi$. 
Since each separatrix has an angle of $\pi$, 
then, the left vertex of $\Sigma(a,b)$ will be a 6-prong.
Similarly, the right region contains $6\cdot 2 + 2 \cdot 4 = 20$ square corners, 
so the right vertex of $\Sigma(a,b)$ will be a 10-prong.
In (c), we move the centers of the squares in such a way that 
half of each square lies along the front of the surface, and half along the back, 
which we can't see. Since the picture along the back is symmetrical, 
we can focus on the front, implicitly doing the same thing in the back.
We expand the squares along the red curves until adjacent squares along 
the red curves have had their sides identified. 
We color the square corners that are to be identified to the left vertex 
in $\Sigma(a,b)$ light green, and label them $x$, 
and color those to be identified with the right one a darker fuchsia, 
and label them $y$.
In (d), we make two side identifications of squares. 
There is one green corner left in the front of the picture, 
and three fuchsia ones, which are yet to be identified. 
In (e), we identify these three fuchsia endpoints. 
This involves
wrapping the edge with two fuchsia endpoints around the 
circle on the right with one fuchsia endpoint.
Once we've done that, 
the picture has one fuchsia point in front and one in back, and 
one green point in front and one in back.
The only sides of squares left unidentified are two outermost ones in the front 
and their mirror images in back, all parallel to the thick black border of the picture. 
In (f), we drag these edges and vertices to the black border and identify them. 
This finally is the dual cell decomposition.

In order to assign lengths
to $a = \{a_1,\ldots,a_m\}$ and $b = \{b_1,\ldots, b_n\}$, 
associate a bipartite graph $\Gamma(a,b)$ as follows. 
For each curve $a_j$ and $b_k$ create a vertex, 
and create one edge between $a_j$ and $b_k$ for each intersection. 
Associated with this graph is $N = N(\Gamma(a,b))$,
an $|a| \x |b|$ incidence matrix whose $(j,k)$ entry is 
$(N)_{j,k} = i(a_j,b_k)$, the geometric intersection number of $a_j$ and $b_k$. 
(or the number of edges between $a_j$ and $b_k$ in $\Gamma(a,b)$.) 

Assume the graph $\Gamma(a,b)$ is connected; 
then the square matrix $NN^\top$ has nonnegative integer entries, 
and some power of $NN^\top$ has 
strictly positive entries. It therefore has a \emph{Perron--Frobenius eigenvalue} 
\cite{Gant}: 
there is a unique positive real eigenvalue $\mu = \mu(a,b)$ of multiplicity one and
such that $\mu > |\mu'|$ for all other eigenvalues $\mu' $of $NN^\top$.
The Perron--Frobenius eigenvalue has a 
\emph{Perron--Frobenius eigenvector} $v$ all of whose entries are 
positive. Let $v' = \mu^{-1/2} N^\top v$. 
Then $Nv' = \mu^{-1/2} NN^\top v = \mu^{1/2} v$, 
so $v= \mu^{-1/2} N v'$, 
and $v'$ is the Perron--Frobenius eigenvector for $N^\top N$, since
\[N^\top N v'
= \mu^{-1/2} N^\top N N^\top v 
= \mu^{-1/2} N^\top (\mu v)
= \mu \mu^{-1/2} N^\top v
= \mu v'.\] 
We complete the flat structure $X = X(a,b)$ by 
giving each rectangle corresponding to 
an intersection of $a_j$, considered as running horizontally and $b_k$, 
considered as running vertically, 
the metric structure of the Euclidean rectangle $[0,v'_k] \x [0,v_j]$. 

To see that the multitwists $T_a$ and $T_b$ act in an affine way on this structure, 
consider doing a Dehn twist around the curve $a_j$.
The squares containing arcs of $a_j$ are the $\sum_k i(a_j,b_k)$ squares 
induced by intersections with curves $b_k$. 
These squares are isometric to $[0,v'_k] \x [0,v_j]$, 
so their union is a cylindrical neighborhood of $a_j$ with height $v_j$ 
and circumference 
\[\sum_k i(a_j,b_k) v'_k  
= (N v')_j 
= (\mu^{-1/2} NN^\top v)_j 
= \mu^{-1/2} \mu v_j = \mu^{1/2} v_j.\]
Putting coordinates in $[0, \mu^{1/2} v_j] \x [0,v_j]$ on this cylinder,
doing a right-handed Dehn twist on this cylinder takes 
$(x,y) \mapsto (x + \frac y{v_j} (\mu^{1/2} v_j), y ) = (x + y \mu^{1/2},y)$. 
Since this is the same on every cylinder, $T_a$ has derivative given everywhere 
by \[\mat{1 & \sqrt \mu \\ 0  & 1}.\]
Similarly, taking all the squares meeting $b_k$ 
gives a cylinder neighborhood with height $v'_k$ 
and circumference 
\[\sum_j v_j \cdot i(a_j,b_k) 
= (v^\top N)_k
= (N^\top v)_k
= (\mu^{-1/2} N^\top N v')_k
= \mu^{1/2} v'_k,
\]
and a Dehn multitwist around $b$ (right-handed!) 
gives an affine map with derivative $\mat{1&0\\-\sqrt\mu&1}$.

By the chain rule, then, an element $f$ of $G(a,b)$ always has a derivative $Df$. 
Since the matrices have determinant $1$, the eigenvalues of $Df$ 
multiply to $1$. In fact, the eigenvalues of $Df$ are
given by $x^2 - \tr(Df) x + 1$, so the eigenvalues are complex conjugates, 
both $\pm 1$, or real $>1$ and inverse depending on whether the absolute value of 
the trace is less than, equal to, or greater than $2$, respectively.
If $Df$ has real eigenvalues, 
then invariant foliations for $f$ are given by the eigenvectors for $Df$, 
and leaves of these foliations are expanded and contracted by their 
corresponding eigenvalues $\l^{\pm 1}$. 
Thus if $Df$ has real eigenvalues, $f$ is a pseudo-Anosov homeomorphism. 
The singularities of the natural foliation on $X(a,b)$ give rise to the singularities
of the invariant foliations for $f \in G(a,b)$.
This is part of the following theorem.

\begin{theorem}[Thurston \cite{Thur}]\label{rep}
The derivative of the action of the multi-twists on the flat structure gives rise to 
a representation $\rho\colon G(a,b) \to \PSL(2,\R)$, 
given by
\[
T_a \mapsto \mat{1&\sqrt\mu\\0&1}, \quad
T_b \mapsto \mat{1&0\\-\sqrt\mu&1}.
\]
The kernel of $\rho$ is finite and 
there is some $k$-fold covering group $G_k \to \PSL(2,\R)$ 
such that $\rho$ lifts to a faithful representation in $G_k$; 
if $a \cup b$ fills $S$, then $k$ is finite.
The image of $\rho$ is a discrete subgroup of $\PSL(2,\R)$, 
and is free just if $\sqrt \mu \geq 2$.
For $g \in G(a,b)$, 
the image $\rho(g)$ 
is elliptic (or the identity), parabolic, or hyperbolic just if $g$ 
is (respectively) finite-order, reducible, or pseudo-Anosov. 
If $\rho(g)$ is reducible, some power of $g$ is a multi-twist, 
and if $\rho(g)$ is hyperbolic, its larger eigenvalue 
is the expansion factor $\l(g)$ of $g$.
\end{theorem}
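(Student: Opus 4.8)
I would prove the statement in four steps, assuming for the bulk of the argument that $a\cup b$ fills $S$ (the case relevant to this paper), so that $X=X(a,b)$ is a flat structure on all of $S$ and $S$ has genus $\ge 2$.

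\emph{Step 1: $\rho$ is a well-defined homomorphism.} This was essentially carried out in the discussion preceding the statement: $T_a$ and $T_b$ are represented by affine automorphisms $\tau_a,\tau_b$ of $X$ with the displayed derivatives, so by the chain rule any word in $T_a,T_b$ is represented by the corresponding word in $\tau_a,\tau_b$, whose derivative is the corresponding product of matrices. The point needing argument is independence of the word: if two words represent the same mapping class, the composite affine automorphism they determine is isotopic to the identity, hence---because for genus $\ge 2$ the natural map $\operatorname{Aff}^{+}(X)\to\MCG(S)$ is injective---equals the identity and so has derivative $I$. Thus $\rho$ is well defined and $G(a,b)\cong\ang{\tau_a,\tau_b}\le\operatorname{Aff}^{+}(X)$.

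\emph{Step 2: the kernel and the faithful lift.} Under this isomorphism $\ker\rho$ corresponds to $\ang{\tau_a,\tau_b}\cap\mathcal{A}$, where $\mathcal{A}$ is the group of affine automorphisms of $X$ with derivative $\pm I$; for genus $\ge 2$ this is finite, so $\ker\rho$ is finite. Being a finite normal subgroup of a group containing pseudo-Anosovs, it lies in the centralizer of a pseudo-Anosov power and is therefore cyclic. To obtain the faithful representation I would unwind this cyclic kernel into the deck group of a suitable cyclic cover $G_k\to\PSL(2,\R)$: the parabolic generators $\rho(T_a),\rho(T_b)$ acquire canonical lifts once one records how much total cone angle each twisting direction sweeps out around $\Sigma(a,b)$, and the lifts of $T_a,T_b$ generate a faithful copy of $G(a,b)$ in $G_k$. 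If $a\cup b$ fills, all cone angles are finite and $k$ may be taken finite; otherwise a disk or annulus complementary region forces passage to the universal cover. I expect this step to be the main obstacle, since it requires checking that the local lifting data around the dual cell decomposition glue consistently; I would follow Thurston's treatment in \cite{Thur}.

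\emph{Step 3: discreteness and the freeness dichotomy.} The image $\rho(G(a,b))$ lies in the Veech group of $X$, which is discrete, so $\rho(G(a,b))$ is discrete. Put $A=\rho(T_a)=\left(\begin{smallmatrix}1&\sqrt\mu\\0&1\end{smallmatrix}\right)$ and $B=\rho(T_b)=\left(\begin{smallmatrix}1&0\\-\sqrt\mu&1\end{smallmatrix}\right)$. If $\sqrt\mu\ge 2$, apply the ping-pong lemma on $\R^2$ with the disjoint cones $U=\{|x|>|y|\}$ and $V=\{|y|>|x|\}$: for $n\ne 0$ and $|y|>|x|$ one has $|x+n\sqrt\mu\,y|\ge 2|y|-|x|>|y|$, so $A^n(V)\subseteq U$, and symmetrically $B^n(U)\subseteq V$; since $\ang{A}$ and $\ang{B}$ are infinite cyclic this yields $\ang{A,B}\cong\mathbb{Z}*\mathbb{Z}$, free of rank $2$. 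Conversely, if $\sqrt\mu<2$ then $\tr(AB)=2-\mu\in(-2,2)$, so $AB=\rho(T_aT_b)$ is elliptic; by discreteness it has finite order, and $(AB)^N=I$ is then a nontrivial relation among the generators, so $\ang{A,B}$ is not free.

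\emph{Step 4: the trichotomy and the remaining assertions.} By Nielsen--Thurston every $g\in G(a,b)$ is finite-order, infinite-order reducible, or pseudo-Anosov, while $\rho(g)$ is elliptic (or the identity), parabolic, or hyperbolic according as $|\tr\rho(g)|$ is $<2$, $=2$, or $>2$; since these are two partitions into three classes, it suffices to match them in one direction. If $\rho(g)$ is elliptic then, by discreteness, it has finite order, so $g^N\in\ker\rho$ for some $N$ and $g$ is finite-order. If $\rho(g)$ is hyperbolic, the two real eigendirections of $D\tau_g$ give a transverse pair of $\tau_g$-invariant measured foliations on $X$, scaled by the eigenvalues $\lambda^{\pm1}$, so $\tau_g$ is pseudo-Anosov; and since the flat metric of $X$ is one of its two canonical metrics and $\tau_g$ stretches it affinely, the larger eigenvalue is precisely the expansion factor $\lambda(g)$. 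The remaining case, $\rho(g)$ parabolic, must then correspond to $g$ reducible of infinite order; here the fixed direction of $\rho(g)$ is a periodic direction, $X$ decomposes into cylinders in that direction, $\tau_g$ permutes them by affine maps, and a power of $\tau_g$ preserving each cylinder acts on it as an integral Dehn multitwist, so a power of $g$ is a multi-twist.
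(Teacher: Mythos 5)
The paper does not actually prove this theorem: it is attributed to Thurston and cited directly to \cite{Thur} (with many details sharpened in \cite{Lein}), and the paragraph preceding the statement only derives the two displayed matrices via the chain rule, then says ``This is part of the following theorem.'' So there is no in-paper proof to compare against; what follows is an assessment of your sketch on its own terms.

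Your outline is a reasonable reconstruction of the standard argument and most of it is sound. Step~1's appeal to injectivity of $\operatorname{Aff}^{+}(X)\to\MCG(S)$ for negative Euler characteristic is the right mechanism for well-definedness, and it also feeds Step~3: discreteness is exactly discreteness of the Veech group, and your ping-pong estimate for $\sqrt\mu\ge 2$ together with the $\tr(AB)=2-\mu\in(-2,2)$ elliptic-element argument for $\sqrt\mu<2$ correctly gives the freeness dichotomy. Step~4's trichotomy is also sound: the ``two exhaustive three-fold partitions, match in one direction'' logic works once all three forward implications are in hand, and you do supply all three; the parabolic case, however, leans on Veech's result that a parabolic-fixed direction is completely periodic (cylinder) --- this should be cited rather than asserted. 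The genuine gap is the one you flag yourself in Step~2: the finiteness of $\ker\rho$ as the isometry group of a flat surface of genus $\ge 2$ is fine, but the construction of the $k$-fold covering group $G_k\to\PSL(2,\R)$ and a faithful lift, and the claim that $k$ is finite exactly when $a\cup b$ fills, is the real content of the theorem and cannot rest on the informal ``record how much cone angle each twist sweeps out.'' This requires the careful bookkeeping in \cite{Thur} (and \cite{Lein}); as written it is an acknowledged placeholder rather than a proof.
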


If $a \cup b$ fills, 
then Thurston showed ``most of'' the classes in the two-generator subgroup 
$G(a,b) := \ang{T_a,T_b}$ of $\MCG(S)$ are pseudo-Anosov
--- all of them except $\ang{T_a}$, $\ang{T_b}$, 
and possibly $T_a^{\pm 1}T_b^{\pm 1}$.

\begin{lemma}
If $g \in G(a,b)$ is pseudo-Anosov, 
the expansion factor $\l(g)$ 
is an algebraic integer quadratic over $\mathbb{Z}[\mu(a,b)]$.
\end{lemma}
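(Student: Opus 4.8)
The plan is to exploit the representation $\rho$ from Theorem~\ref{rep}. Since $g \in G(a,b)$ is pseudo-Anosov, $\rho(g)$ is hyperbolic and its larger eigenvalue is exactly $\l(g)$. First I would observe that every generator $T_a, T_b$ maps under $\rho$ to a matrix with entries in $\mathbb{Z}[\sqrt\mu]$ — indeed, with entries in $\{0,1,\pm\sqrt\mu\}$. Hence for any word $g$ in $T_a^{\pm 1}, T_b^{\pm 1}$, the matrix $\rho(g) \in \SL(2,\R)$ has all four entries in the ring $\mathbb{Z}[\sqrt\mu]$. In particular $\tr\rho(g) \in \mathbb{Z}[\sqrt\mu]$.

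Next I would recall, from the discussion just before Theorem~\ref{rep}, that the eigenvalues of $\rho(g)$ are the roots of the characteristic polynomial $x^2 - \tr\rho(g)\, x + 1$. Writing $t = \tr\rho(g)$, the expansion factor is the larger root
\[
\l(g) = \frac{t + \sqrt{t^2 - 4}}{2},
\]
which satisfies $\l(g)^2 - t\,\l(g) + 1 = 0$. Thus $\l(g)$ is a root of a monic degree-2 polynomial with coefficients $t$ and $1$ in $\mathbb{Z}[\sqrt\mu] = \mathbb{Z}[\mu][\sqrt\mu]$; a fortiori $\l(g)$ is integral of degree $\le 2$ over $\mathbb{Z}[\mu]$ once we check $t \in \mathbb{Z}[\mu]$ is not automatic — rather, $t \in \mathbb{Z}[\sqrt\mu]$, so $\l(g)$ is integral over $\mathbb{Z}[\sqrt\mu]$, and since $\sqrt\mu$ is itself integral over $\mathbb{Z}[\mu]$ (it satisfies $x^2 - \mu = 0$), transitivity of integral dependence shows $\l(g)$ is an algebraic integer over $\mathbb{Z}[\mu]$. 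For the quadratic claim, the cleanest route is to note that $\l(g) + \l(g)^{-1} = t \in \mathbb{Z}[\sqrt\mu]$ and $\l(g)\cdot\l(g)^{-1} = 1$, so $\l(g)$ generates an extension of $\mathbb{Z}[\sqrt\mu]$ of degree at most $2$; combined with $[\mathbb{Z}[\sqrt\mu]:\mathbb{Z}[\mu]] \le 2$ this gives degree at most $4$ over $\mathbb{Z}[\mu]$, which is weaker than wanted.

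To get the sharper "quadratic over $\mathbb{Z}[\mu]$" I would argue that $t$ in fact lies in $\mathbb{Z}[\mu]$ up to the relevant adjustment: observe that $\rho(T_a)\rho(T_b)$, $\rho(T_a)\rho(T_b)^{-1}$, etc., have traces $2 - \mu$, $2 + \mu$, so products of an even number of generators contribute $\mathbb{Z}[\mu]$ to the trace, while a single extra generator contributes a $\sqrt\mu$. More precisely, I would show by induction on word length that $\rho(g)$ has the block form $A + B\sqrt\mu$ where $A$ has even/odd entries governed by parity, so that $\tr\rho(g) \in \mathbb{Z}[\mu]$ whenever the total exponent sum of, say, $T_a$ plus that of $T_b$ is even — and otherwise $\l(g)$ is quadratic over $\mathbb{Z}[\mu][\sqrt\mu]$ with $\sqrt\mu$ itself quadratic, but one can pass to $g^2$ (which has the same invariant foliations and whose expansion factor is $\l(g)^2$) to reduce to the even case; then $\l(g)^2$ is quadratic over $\mathbb{Z}[\mu]$, forcing $\l(g)$ quadratic over $\mathbb{Z}[\mu]$ as well via the tower $\mathbb{Z}[\mu] \subseteq \mathbb{Z}[\mu,\l(g)^2] \subseteq \mathbb{Z}[\mu,\l(g)]$ each step of degree $\le 2$ — hmm, that again only gives $4$. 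The real fix is that $\l(g)$ and $\l(g)^{-1}$ are the two roots of $x^2 - t x + 1$ and these are conjugate over $\mathbb{Q}(\mu)$ precisely when $t \notin \mathbb{Q}(\mu)$; the field $\mathbb{Q}(\mu)(\l(g))$ has degree $\le 2$ over $\mathbb{Q}(\mu)$ iff $t \in \mathbb{Q}(\mu)$, so the theorem as stated implicitly asserts $\tr\rho(g) \in \mathbb{Q}(\mu)$, which I expect to be the crux.

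\textbf{The main obstacle} is precisely this trace computation: showing $\tr\rho(g) \in \mathbb{Z}[\mu]$ (not merely $\mathbb{Z}[\sqrt\mu]$) for pseudo-Anosov $g$. I anticipate handling it by a careful parity/induction argument on the word length of $g$ in the generators $T_a^{\pm1}, T_b^{\pm1}$, tracking the $\sqrt\mu$-adic expansion of the matrix entries; once that is in place, the conclusion that $\l(g)$ is an algebraic integer quadratic over $\mathbb{Z}[\mu]$ follows immediately from the characteristic polynomial $x^2 - \tr\rho(g)\,x + 1$, which is monic with coefficients in $\mathbb{Z}[\mu]$.
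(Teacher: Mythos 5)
Your overall plan — use the representation $\rho$, argue that $\tr\rho(g)\in\mathbb{Z}[\mu]$, then read off the quadratic $y^2-\tr\rho(g)\,y+1=0$ — is exactly the paper's strategy, and you correctly isolate the crux as the trace computation. But the parity/word-length mechanism you propose for that step is based on a false premise and would not work. Your conjecture is that $\tr\rho(g)\in\mathbb{Z}[\mu]$ only when the total exponent sum in $T_a,T_b$ is even; but $\rho(T_a)$ itself, a word of odd exponent sum, already has trace $2\in\mathbb{Z}[\mu]$, which contradicts your stated dichotomy. Chasing this down the $g^2$ route, as you noticed yourself, only yields degree at most $4$ and doesn't close the gap.

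The clean fact that resolves this — and what the paper actually proves — is that no parity phenomenon exists at all: the set of $2\times 2$ matrices with diagonal entries in $\mathbb{Z}[\mu]$ and off-diagonal entries in $\sqrt{\mu}\cdot\mathbb{Z}[\mu]$ is closed under multiplication. One checks this by a one-line product computation: if $M=\left(\begin{smallmatrix}p & q\sqrt{\mu}\\ r\sqrt{\mu} & s\end{smallmatrix}\right)$ and $M'=\left(\begin{smallmatrix}p' & q'\sqrt{\mu}\\ r'\sqrt{\mu} & s'\end{smallmatrix}\right)$ with $p,q,r,s,p',q',r',s'\in\mathbb{Z}[\mu]$, then $MM'$ again has diagonal entries $pp'+\mu qr'$, $\mu rq'+ss'\in\mathbb{Z}[\mu]$ and off-diagonal entries in $\sqrt{\mu}\cdot\mathbb{Z}[\mu]$. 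Since $\rho(T_a^{\pm 1})$ and $\rho(T_b^{\pm 1})$ all have this shape, so does $\rho(g)$ for \emph{every} $g\in G(a,b)$, and hence $\tr\rho(g)\in\mathbb{Z}[\mu]$ unconditionally. The rest of your argument (characteristic polynomial, larger root, $\l(g)=\tfrac12\bigl(t+\sqrt{t^2-4}\bigr)$) then goes through verbatim. So the idea of "tracking the $\sqrt\mu$-adic expansion of the matrix entries" was the right instinct — you just needed to observe that the $\sqrt\mu$ only ever appears off the diagonal, which makes the parity bookkeeping unnecessary.
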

\begin{proof}
Note that $\rho(T_a) = \mat{1&\sqrt \mu\\0 & 1}$ 
and $\rho(T_b) = \mat{1 & 0 \\ -\sqrt \mu & 1}$ 
have diagonal entries (here $1$) in $\mathbb{Z}[\mu]$ 
and anti-diagonal entries in $\sqrt \mu \cdot \mathbb{Z}[\mu]$.
The product of two such matrices again has this form:
\[\mat{p(\mu) & q(\mu)\sqrt{\mu}  \\ r(\mu) \sqrt{\mu} & s(\mu)}
\mat{p'(\mu) &  q'(\mu)\sqrt{\mu} \\ r'(\mu) \sqrt{\mu} & s'(\mu)}
 =
\mat{(pp')(\mu) + \mu (qr')(\mu) & \sqrt{\mu} (pq' + qs')(\mu) \\ 
\sqrt{\mu}(rp'+sr')(\mu) & \mu (rq')(\mu) +  (ss')(\mu)}
.\]
In particular, the trace of such a matrix is in $\mathbb{Z}[\mu]$, 
so since the determinant is $1$, 
the larger eigenvalue $\l(g)$ satisfies 
the equation $y^2 - p(\mu)y + 1 = 0$ 
for some $p(x) \in \mathbb{Z}[x]$.
One has $\l = \frac 1 2\paren{p(\mu) + \sqrt{p(\mu)^2 - 4}}$.
\end{proof}

If two pseudo-Anosovs $g \in G(a,b)$ and $h \in G(a',b')$ are commensurable, 
then there are $n,m \in \mathbb{Z}\less\{0\}$ 
such that $\l(g)^n = \l(h)^m$, 
and these powers are in $\Q(\l(g)) \inter \Q(\l(h))$.
Writing $\mu = \mu(a,b)$ and $\mu' = \mu(a',b')$, 
then there are polynomials $p,q \in \mathbb{Z}[x]$ 
and $r,s,t,u \in \Q[x]$ 
such that \[r(p(\mu)) +s(p(\mu)) \sqrt{p(\mu)^2 - 4} = 
t(q(\mu')) + u(q(\mu'))\sqrt{q(\mu')^2 - 4}.\]
This would seem to put rather serious constraints on $\l(g),\l(h)$. 
For example, if $g,h \in G(a,b)$, 
then we have $\Q(\mu,\l(g))$ and $\Q(\mu,\l(h))$ quadratic over $\Q(\mu)$, 
so if $g,h$ are commensurable, then $\l(g)$ and $\l(h)$ have some common power, 
and if it isn't in $\Q(\mu)$, then $\Q(\mu,\l(g)) = \Q(\mu,\l(h))$. 
But a power of $\l$ isn't in $\Q(\mu)$ unless $\sqrt{p(\mu)^2 - 4} \in \Q(\mu)$, 
in which case $\l$ itself is in $\Q(\mu)$. 

We might compare, for example, the discriminants of $\Q(\mu,\l(g))$ 
and $\Q(\mu,\l(h))$ over $\Q(\mu)$ or $\Q$ with a number theory program.

\section{The invariant $J$}\label{J}

Associated to a flat surface $X$ (see \cite{Zor} for definitions), 
Kenyon and Smillie \cite{KS} 
associate an invariant 
$J$ in the rational vector space $\R^2 \wedge_\Q \R^2$. 
To define it, one starts with a decomposition of the flat structure into planar polygons.
For each planar polygon $P$, with vertices $v_1,\ldots,v_n$, 
define \[J(P) = v_n \wedge v_1 + \sum_{j=1}^{n-1} v_j \wedge v_{j+1}.\]
Then if $X$ can be decomposed as a union of planar polygons 
$X = \Union_{j=1}^n P_j$, 
glued along edges by translations,
define $J(X) = \sum_j J(P_j)$. 

One can show that $J(X)$ is independent of the decomposition and 
that $J(P)$, $P$ a polygon is independent of translations.
$J$ does vary under rotations, though. Suppose we
rotate a polygon by an angle of $\theta$ around the origin.
Making the canonical identification $\C \iso \R^2$, 
the new vertices are given by $e^{i\theta}v_j$,
and so the new $J(P)$ is 
$e^{i\theta} v_n \wedge e^{i\theta} v_1 
+ \sum_{j=1}^{n-1} e^{i\theta}  v_j \wedge e^{i\theta} v_{j+1}$. 
In particular, rotation by $\pi$ leaves $J$ unchanged.

A useful formula for us is the following. 
Define the {\em edge vectors} by 
$e_1 = v_1 - v_n$ and $e_j = v_j - v_{j-1}$ for $j > 1$. 
Then for a rectangle $R$ with vertices $v_1,v_2,v_3,v_4$ arranged to be horizontal 
and vertical in the plane, we have $J(R) = 2e_1 \wedge e_2$.

Given a covering of surfaces $p\colon \wt S \to S$, we say it is a \emph{covering 
of flat surfaces} if we can put flat structures $\wt X$ and $X$ on $\wt S$ and $S$ 
given by quadratic differentials $\wt q$ and $q$ such that $p^* q = \wt q$ 

\begin{lemma}
Given an $n$-fold covering $p \colon \wt X \to X$ of flat surfaces, 
$J(\wt X) = n J(X)$.
\end{lemma}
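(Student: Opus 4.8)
The plan is to exploit the fact that $J$ is computed from a polygon decomposition of the flat surface, together with the defining property $p^*q = \wt q$ of a covering of flat surfaces, which says that $p$ is a local isometry respecting the flat (translation) structure. First I would fix a decomposition of $X$ into planar polygons $X = \bigcup_{j=1}^r P_j$, glued along edges by translations, and observe that pulling this back along $p$ gives a decomposition of $\wt X$: since $p$ is an $n$-fold covering which is a local isometry with respect to the flat charts, the preimage $p^{-1}(P_j)$ of each polygon $P_j$ is a disjoint union of exactly $n$ copies of $P_j$, each mapped isometrically (and by a translation in the flat charts, since $p^*q = \wt q$ forces the transition maps to be translations) onto $P_j$. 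One has to be slightly careful at the vertices and edges of the decomposition — a priori $p^{-1}$ of an edge could contain a branch point — but the standard move is to choose the original decomposition of $X$ so that all cone points (zeros of $q$) and all branch points of $p$ lie among the vertices, so that over the interiors of faces and edges $p$ is an honest covering.

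Next I would compare $J$ term by term. Each lift $\wt P$ of $P_j$ is, as a planar polygon, a translate of $P_j$; by the stated fact that $J(P)$ is independent of translations, $J(\wt P) = J(P_j)$. Since there are exactly $n$ lifts of each $P_j$, summing over all faces of the lifted decomposition gives
\[
J(\wt X) \;=\; \sum_{j=1}^r \sum_{\wt P \mapsto P_j} J(\wt P) \;=\; \sum_{j=1}^r n\, J(P_j) \;=\; n \sum_{j=1}^r J(P_j) \;=\; n\, J(X),
\]
using that $J(\wt X)$ is independent of the choice of decomposition of $\wt X$ (so in particular we may compute it with the pulled-back one). This is the whole argument modulo the bookkeeping.

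The main obstacle is the vertex/branch-point issue: I need the pullback decomposition to genuinely be a polygon decomposition of $\wt X$ of the kind $J$ is defined on (planar polygons glued by translations), and I need each face upstairs to be an exact translate of a face downstairs rather than merely a combinatorial copy. This is handled by two observations: (i) translation surfaces pull back to translation surfaces under $p^*q = \wt q$, so the charts on $\wt X$ really are obtained by composing with $p$ and the gluings are still translations; and (ii) branch points are isolated, so by subdividing $X$ if necessary we may assume the branch locus is contained in the $0$-skeleton, after which $p$ restricted to each open face is a trivial $n$-sheeted cover and each sheet is an isometric (translation) copy. Once this is set up the term-by-term count and the translation-invariance of $J$ finish it; I would also remark that the same computation shows the result is insensitive to the choice of decomposition of $X$, consistent with the already-cited invariance of $J$.
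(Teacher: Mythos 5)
Your proposal is correct and follows essentially the same route as the paper: fix a polygonal decomposition of $X$, lift it to $\wt X$ (the paper invokes simple-connectivity of each $P_j$ to lift the inclusion, which is the same as your trivial-$n$-sheeted-cover observation), count $n$ translate copies of each polygon, and use translation-invariance of $J$. Your extra care about branch points is harmless but unnecessary here: by the paper's definition a covering of flat surfaces is a genuine (unbranched) covering of surfaces with $p^*q = \wt q$, so there is no branch locus to worry about.
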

\begin{proof}
Let a convex polygonal decomposition $X = \Union_j P_j$ be given.
Since each polygon $P_j$ is simply-connected, 
the inclusions $P_j \inc X$ lift to $\wt X$, 
so $\wt X$ is tiled by lifts of these polygons. 
There are $n$ copies of each $P_j$ in this decomposition of $\wt X$, 
so $J(\wt X) = n J(X)$.
\end{proof}

A pseudo-Anosov $(S,\phi)$
determines a pair of transverse measured singular foliations $\mc F^u$ and 
$\mc F^s$ on $S$, unique up to a multiplicative constant for the measures. 
These measured foliations 
in turn determine a quadratic differential that evaluates to positive 
real numbers at tangent vectors to the unstable foliation and to negative reals 
at tangent vectors to the stable foliation. This gives a flat structure preserved 
by the pseudo-Anosov, uniquely defined up to scale. 
That is, consider the map $\psi_{r,s}\colon \R^2 \to \R^2$ given by 
$(x,y) \mapsto (rx,sy)$ for $r, s \in \R^\x$, 
which induces a map $\Psi_{r,s}\colon 
\R^2 \wedge_\Q \R^2 \to \R^2 \wedge_\Q \R^2$ 
by $v \wedge w \mapsto \psi_{r,s}(v) \wedge \psi_{r,s}(w)$; 
$J$ is determined up to the map $\Psi_{r,s}$.

\begin{theorem}
For two commensurable pseudo-Anosovs $\phi_1$ and $\phi_2$, 
with associated flat surfaces $X_1$ and $X_2$, 
there are $r,s \in \R^\x$ with such that 
$\Psi_{r,s} (J(X_1)) = J(X_2)$.
\end{theorem}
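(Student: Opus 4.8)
The plan is to push both invariant flat structures up to a common finite cover and combine the two previous lemmas with the rigidity of the invariant foliations of a pseudo-Anosov. First I would unpack the hypothesis: commensurability of $\phi_1$ and $\phi_2$ supplies an automorphism $(\wt S,\wt\phi)$ together with finite coverings $p_1\colon\wt S\to S_1$ and $p_2\colon\wt S\to S_2$, say of degrees $d_1$ and $d_2$, with $p_1\colon\wt\phi\to\phi_1^{m}$ and $p_2\colon\wt\phi\to\phi_2^{n}$ for some nonzero $m,n$. Passing to a power does not alter the invariant foliations of a pseudo-Anosov (it only rescales the transverse measures), so the flat surface associated to $\phi_i^{m_i}$ is again $X_i$ up to scale; write $q_i$ for its defining quadratic differential.

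Next I would note that $\wt\phi$ is itself pseudo-Anosov: a finite (possibly branched) cover of a pseudo-Anosov is pseudo-Anosov, since $p_i^*\mc F^u$ and $p_i^*\mc F^s$ are transverse measured foliations filling $\wt S$ and $\wt\phi$ expands the first and contracts the second by $\l(\phi_i^{m_i})$; in particular $\l(\phi_1^{m})=\l(\phi_2^{n})$. Thus $\wt\phi$ has its own invariant flat structure $\wt X$ with defining differential $\wt q$, unique up to $\Psi_{r,s}$. The key point is then that the foliation pair of $\wt\phi$ coincides with each of $(p_1^*\mc F^u,p_1^*\mc F^s)$ and $(p_2^*\mc F^u,p_2^*\mc F^s)$, because the invariant measured foliations of a pseudo-Anosov are unique up to individually rescaling the two transverse measures. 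Since rescaling those measures by $(r,s)$ rescales the natural flat charts by $\psi_{r,s}$ and hence transforms $J$ by $\Psi_{r,s}$, we get $J(\wt X)=\Psi_{r_1,s_1}\bigl(J(p_1^*X_1)\bigr)=\Psi_{r_2,s_2}\bigl(J(p_2^*X_2)\bigr)$ for suitable $r_i,s_i\in\R^\x$.

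Finally I would invoke the covering lemma for $J$: $p_i\colon p_i^*X_i\to X_i$ is a $d_i$-fold covering of flat surfaces, so $J(p_i^*X_i)=d_i\,J(X_i)$. Combining, and using that each $\Psi_{r,s}$ is $\Q$-linear, gives
\[
d_1\,\Psi_{r_1,s_1}\bigl(J(X_1)\bigr)\;=\;J(\wt X)\;=\;d_2\,\Psi_{r_2,s_2}\bigl(J(X_2)\bigr),
\]
so that $\Psi_{r,s}(J(X_1))=J(X_2)$ with $(r,s)=(r_1/r_2,\,s_1/s_2)$, up to the positive rational factor $d_1/d_2$. This last rational ambiguity is exactly the scale ambiguity already built into $J$ (the invariant really being $\R^{+}J$, as remarked in the introduction), so it costs nothing.

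I expect the real obstacle to be the middle step: making precise that the flat structure cut out by a pseudo-Anosov is rigid enough that two flat structures on $\wt S$ inducing the invariant foliations of $\wt\phi$ must agree up to $\Psi_{r,s}$, and being careful that the coverings here can be branched over the zeros of the differentials, so that "covering of flat surfaces" in the sense of the earlier lemma genuinely applies (one should choose the polygonal decompositions so that branch points sit at vertices). Everything after that is bookkeeping with covering degrees and with the $\Q$-linearity of $\Psi_{r,s}$.
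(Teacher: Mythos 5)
Your proposal is correct and follows essentially the same route as the paper's proof: pass to the common cover $\wt S$, use that the invariant measured foliations of $\wt\phi$ are unique up to individual rescaling of the two transverse measures (giving a $\Psi_{r,s}$ matching the two pulled-back flat structures), and then apply the $n$-fold covering lemma for $J$ to descend to $X_1$ and $X_2$. The paper is somewhat terser and handles the orientation-reversing case by an explicit $x\mapsto -x$ rather than absorbing it into $\R^\times$, but the underlying argument is the same; your extra care about branch points at polygon vertices is a reasonable precaution the paper leaves implicit.
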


\begin{proof}
If $\phi_1$ and $\phi_2$ are commensurable, their invariant foliations lift to 
the same foliations $\wt {\mc F}^s$, $\wt{\mc F}^u$ 
under the surface coverings $p_j\colon \wt S \to X_j$. 
Since the common lift $\wt \phi$ must alter the transverse measures according to 
$\wt \phi \wt {\mc F}^s = \frac 1 \l  \wt {\mc F}^s$ and
$\wt \phi \wt {\mc F}^u = \l  \wt {\mc F}^u$, the transverse measures are uniquely 
determined up to one scaling factor each. 
Thus we can alter the measures on $\wt {\mc F}^s$ and $\wt {\mc F}^u$ 
induced by lifting the invariant foliations of $\phi_1$ 
to be equal to those induced by $\phi_2$.
This induces $J(X_1) \mapsto \Psi_{r',s'} (J(X_2))$ for some $r',s' \in \R^+$.

It may be that one of the covering maps $p_j$ is orientation-reversing. In case this 
happens, we alter the flat structure of one of our surfaces, say $X_1$ 
by the map $x \mapsto -x$. 
This induces the transformation $J(X_1) \mapsto - J(X_1)$.

Now the two flat surfaces $X_1$ and $X_2$ have a common covering flat surface, 
then there is $q \in \Q^+$ such that $q J(X_1) = J(X_2)$.
If we define the stable foliation to run 
``north-south'' and the unstable to run ``east-west,'' 
then the frame is determined up to a rotation by $\pi$. 

Putting this all together, we have $\pm q \Psi_{r,s} (J(X_1)) = J(X_2)$.
\end{proof}

One free variable can be removed 
by requiring the total area of the surface to be $1$,
but there is more flexibility in this construction than we would like. 

The natural invariant foliations for different pseudo-Anosovs 
in a group $G(a,b)$ generally point in different directions, 
so the associated flat structures for different pseudo-Anosovs differ by 
$v\wedge w \mapsto e^{i\theta} v \wedge e^{i\theta} w$. 
However, once we fix a scaling, 
this is the only difference between different flat structures for
pseudo-Anosovs in a group $G(a,b)$.

\section{Some two-multitwist groups and their associated invariants}
\label{invariants}

Leininger \cite{Lein} found precise conditions 
on the intersection graph $\Gamma(a,b)$ of the multicurves $a$ and $b$ 
for the subgroup $G(a,b)$ to be free:
it is free just if the graph has some component that is not among the graphs
$\mc{A}_j, \mc{D}_j,\mc E_6, \mc E_7, \mc E_8,  j \in \N$. 
The Teichm\"{u}ller curves for which the 
associated stabilizers contain with finite index 
a group generated by two positive multi-twists 
are these and others corresponding to graphs $\mc P_{2j}, \mc Q_j, 
\mc R_7, \mc R_8, \mc R_9$, $j \in \N$. 
These graphs are Dynkin diagrams with simple edges,
pictures of which will appear in Figures 2--11.

In this section, we describe the invariants $\delta (a,b))$, $\mu(a,b)$, and $J(X(a,b))$ 
for certain multicurve configurations $a,b$.
For pairs of multicurve configurations $a,b$ and $a',b'$, 
if $\delta(G(a,b))$ and $\delta(G(a',b'))$ are not rationally commensurable, 
then no pseudo-Anosov element of $G(a,b)$ 
is commensurable with any in $G(a',b')$. 
The same is true if the invariants $J$ differ other than by the action of 
$S^1 \x \R^+$.

In all of our pictures, $a$ will be the red curves and $b$ the light blue ones.
For a bipartite graph $\Gamma$ we show 
\begin{enumerate} 
\item a multicurve configuration $a,b$ on a surface $S$ 
such that $\Gamma(a,b) = \Gamma$;
\item $\Gamma$ itself;
\item the dual cell decomposition $\Sigma(a,b)$ corresponding to invariant foliations 
for pseudo-Anosovs in $G(a,b)$, with singularity orders written at vertices;
\item the flat structure $X(a,b)$ minus length information---this is basically 
the picture of $\Sigma(a,b)$ cut along a few edges and straightened out.
Note that this picture is only an approximation; the cells are \emph{not really} 
squares, but are just notated that way for uniformity of presentation.
\end{enumerate}
We sometimes omit the picture of $\Sigma(a,b)$ on the surface 
in preference for the square-tile picture.
The edge labelling in the former is preserved in the latter when both are presented.
In the picture of $\Sigma(a,b)$, 
only one side of the surface is shown, for clarity, which amounts to 
an assumption the surface is not transparent.
The other side looks the same, 
except for edges that would otherwise be 
along the dark black boundary,
which I have pushed in into the visible side.
The labels in parentheses are for ``invisible'' edges 
that lie wholly on the other side of $S$.
\bigskip

We remark that the pictures we have drawn are essentially unique 
in the following sense.

\begin{lemma}[Leininger \cite{Lein}]\label{unique}
Suppose $a \cup b$ fills $S$ and $a' \cup b'$ fills $S'$, 
and their incidence graphs $\Gamma(a,b)$ are the same. 
If $\Gamma(a,b)$ is a tree with all but possibly one vertex of valence $\leq 2$ 
and the remaining vertex of valence $\leq 3$, 
then there is a homeomorphism $S \to S'$ taking $a \cup b \to a' \cup b'$, 
up to adding marked points.
\end{lemma}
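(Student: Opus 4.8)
The plan is to recover the surface, the multicurves, and the homeomorphism directly from the combinatorics of the dual cell decomposition $\Sigma(a,b)$, using the structural hypothesis on $\Gamma(a,b)$ to pin down $\Sigma(a,b)$ up to isomorphism. First I would observe that, because $a\cup b$ fills $S$, the curve system $a\cup b$ together with the cells of $S\less(a\cup b)$ is completely encoded by the bipartite incidence graph $\Gamma(a,b)$ \emph{plus} the cyclic ordering of edges around each vertex of $\Gamma$ (a ribbon/fat-graph structure), since this ordering records how the arcs of $b$ cross a given curve $a_j$ in sequence along $a_j$, and dually. The faces of $S\less(a\cup b)$ — equivalently the vertices of $\Sigma(a,b)$ — are then read off as the boundary cycles of this ribbon graph, and their prong orders are determined as in Section~3. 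So it suffices to show that when $\Gamma$ is a tree with all vertices of valence $\le 2$ except one of valence $\le 3$, the ribbon structure is forced, and hence $S$, the pattern $a\cup b$, and the cell decomposition are determined up to homeomorphism (and up to marked points, which arise from any freedom in placing punctures in the interiors of complementary disks).

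The key steps, in order, are: (1) set up the ribbon-graph/fat-graph formalism and prove the translation ``(filling $a\cup b$ on $S$) $\leftrightarrow$ (ribbon structure on $\Gamma(a,b)$)'' as in Leininger's paper; (2) at a valence-$\le 2$ vertex there is a unique cyclic order on $\le 2$ edges, so the ribbon structure is already determined everywhere except possibly at the single exceptional vertex $w$; (3) at $w$, which has valence $\le 3$, the only cyclic orders on $3$ edges are the two that differ by a reflection, so the ribbon graph is determined up to the global orientation-reversing symmetry, which is realized by an (orientation-reversing) homeomorphism of the surface and hence does not obstruct the existence of a homeomorphism $S\to S'$; (4) conclude that the abstract ribbon graph, hence the boundary cycles, hence the genus (via Euler characteristic) and the isotopy class of $a\cup b\sub S$, are all determined, and lift this combinatorial isomorphism to a homeomorphism $S\to S'$ carrying $a\cup b$ to $a'\cup b'$; (5) account for marked points: any complementary region that is a disk may or may not carry a puncture, and this is exactly the ``up to adding marked points'' clause, while the topological type of the closed surface underlying it is unchanged.

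The main obstacle is step~(3), the exceptional trivalent vertex: a priori there are two ribbon structures there, and they could in principle give genuinely different surfaces or non-homeomorphic curve configurations rather than mirror images. The resolution is to check that switching the cyclic order at $w$ is globally an orientation-reversing relabeling of the whole fat graph — because every other vertex has valence $\le 2$, reversing the cyclic order at all vertices simultaneously only changes the cyclic order at $w$ — so the two ribbon graphs are carried to one another by an orientation-reversing homeomorphism, which is permitted by the statement (it only asserts the existence of \emph{a} homeomorphism, not an orientation-preserving one). A secondary subtlety is verifying that the ribbon structure genuinely recovers not just the surface but the \emph{pair} $(S,a\cup b)$ up to homeomorphism — i.e.\ that no extra twisting of arcs along curves is possible beyond what the cyclic orders record — which follows because a regular neighborhood of $a\cup b$ in $S$ is the ribbon surface of $\Gamma(a,b)$ and the complementary regions are disks (filling hypothesis), so $S$ is obtained from that ribbon surface by capping off boundary circles with disks in the unique way.
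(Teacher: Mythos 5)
Note that the paper states this lemma as a citation to Leininger \cite{Lein} without giving its own proof, so I evaluate your proposal on its own. Your strategy --- encode the filling pair by a rotation (ribbon) structure on $\Gamma(a,b)$, observe the cyclic order is forced at valence $\le 2$ vertices, count the two cyclic orders at the exceptional trivalent vertex, and note they are exchanged by a global orientation-reversal --- is the right idea, and the handling of marked points is fine. The gap is in your justification of the key reduction (that the ribbon structure on $\Gamma(a,b)$ determines $(S,a\cup b)$), which rests on the false claim that ``a regular neighborhood of $a\cup b$ in $S$ is the ribbon surface of $\Gamma(a,b)$.'' The ribbon surface of any tree is a disk (Euler characteristic $1$), while a regular neighborhood of a filling $a\cup b$ has the same genus as $S$; already for $\mc A_4$ on a genus-$2$ surface this cannot match. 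You are conflating $\Gamma(a,b)$ (vertices $=$ curve components, edges $=$ intersection points) with the embedded $4$-valent graph $a\cup b$ (vertices $=$ intersection points, edges $=$ arcs between them); the rotation system that determines a regular neighborhood lives on the latter, not the former, and you have not established that a ribbon structure on $\Gamma(a,b)$ determines one on $a\cup b$.

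The repair, which you in fact gesture at in your opening line, is to use the dual decomposition $\Sigma(a,b)$ directly rather than regular neighborhoods. A ribbon structure on $\Gamma(a,b)$ is precisely a pair of permutations $(\sigma,\tau)$ of the intersection points (the edges of $\Gamma$), with $\sigma$ recording the cyclic order along each $a$-curve and $\tau$ along each $b$-curve. Take one unit square per intersection point and glue the right side of square $e$ to the left side of square $\sigma(e)$ and the top of $e$ to the bottom of $\tau(e)$: the result is the origami/square-tiled closed surface, which is exactly $S$ equipped with $\Sigma(a,b)$, and the horizontal and vertical core circles of the cylinders recover $a$ and $b$. This exhibits $(\sigma,\tau)$ as determining $(S,a\cup b)$ up to orientation-preserving homeomorphism, with no appeal to regular neighborhoods at all. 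The remainder of your argument --- two cyclic orders at the trivalent vertex $w$, swapped by reversing all cyclic orders simultaneously (which changes only $w$, since all other cyclic orders are on $\le 2$ elements and hence fixed), so the two ribbon graphs are mirror images realized by an orientation-reversing homeomorphism --- then goes through as written.
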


\newpage

Our first item is the intersection graph $\mathcal A_{n}$. 
Shown below are cases $n=2,4,6$. 
The groups $G(\mc A_n)$ are \emph{not} free, according to Leininger's result. 
The graph $\mc A_n$ determines a filling curve configuration uniquely 
up to conjugacy and adding punctures, by Lemma 3, 
so our picture is essentially unique.
Since $a \cup b$ does not separate
the surface if $n$ is even, 
we get that the singularity data $\delta(\mc A_{2n})$ is $e_{4n-2}$, 
a single $(4n-2)$-prong singularity. 
(If $n=1$, $\delta = e_2 = 0$ has no singularities.)
Since these vectors are rationally incommensurable for different $n$, 
if $\phi \in G(\mc A_{2n})$ and $\psi \in G(\mc A_{2m})$ are commensurable 
pseudo-Anosovs, we must have $m = n$.
\bigskip

\begin{figure}[h]
{
\includegraphics{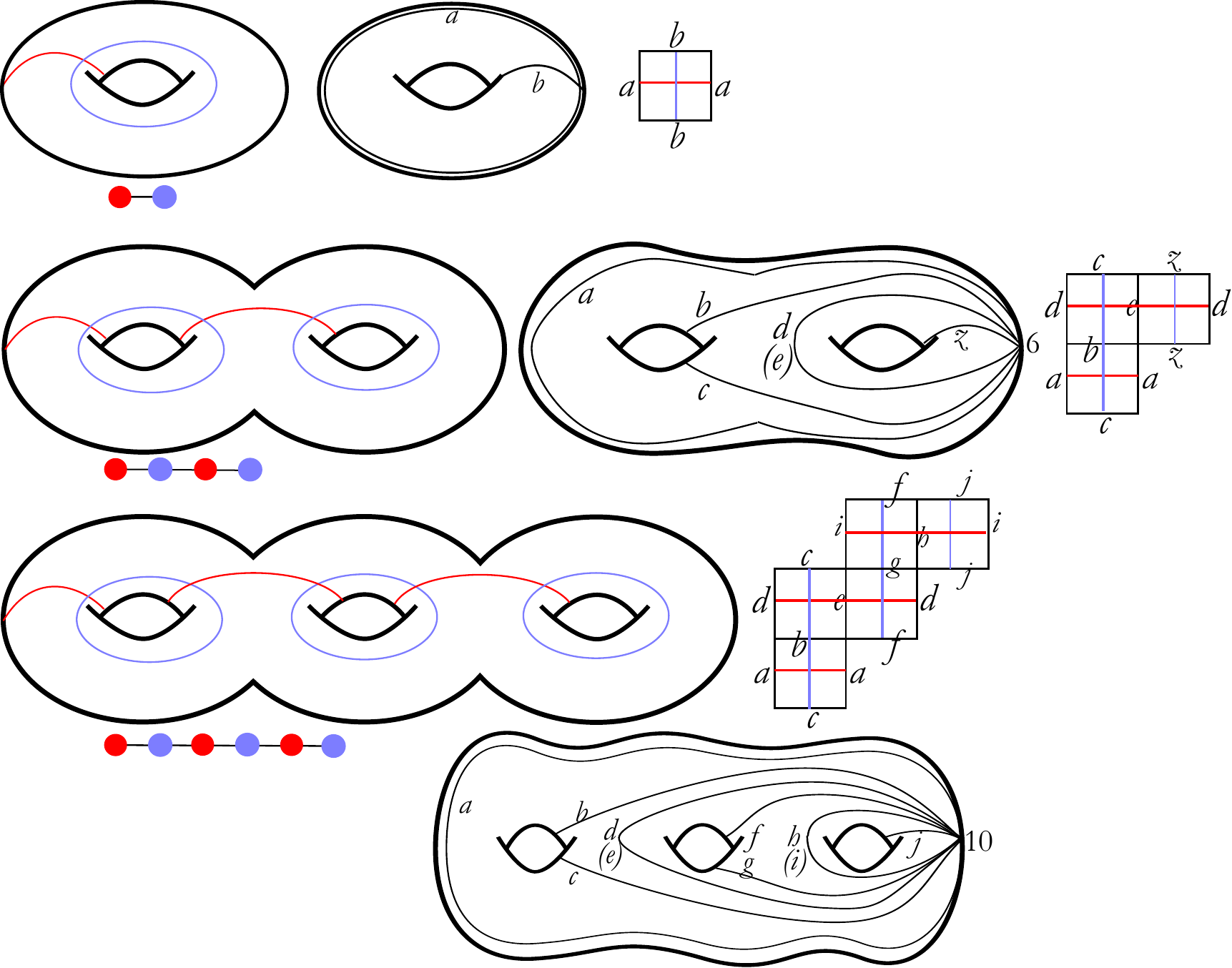}
}
\caption{$\mc A_{2n}$}
\end{figure}
The incidence matrix $N(\mc A_2) = (1)$, so $\mu = 1$.
The incidence matrix $N(\mc A_4) = \mat{1 & 0 \\ 1 & 1}$, 
so $NN^\top = \mat{1&1\\1&2}$ and $\mu = \frac 1 2(3 + \sqrt 5)$.
An eigenvalue $\mat{x\\y}$ then satisfies $x + y = \mu x$, or 
$y = (\mu -1)x = \frac {1 + \sqrt 5}{2} x$, where $\mu - 1 = \sqrt \mu = \g$ 
is the golden ratio. 
Arbitrarily setting $x = 1$, we get a Perron--Frobenius eigenvector 
$v = \mat{1\\ \g }$. 
The associated eigenvector for $N^\top$ is 
$\mu^{-1/2} N^\top v = \g^{-1} \mat{\g^2 \\ \g } = \mat{\g \\ 1} = v$ again.
So the flat strucure associated to $\mc A_4$ 
consists of three rectangles of proportions $\g \x 1$, $\g \x \g$, and $1 \x \g$. 
For this reason, this table is called the {\em golden table}; see  \cite{McM}.
The $J$-invariants for these rectangles are respectively $2(\g,0)\wedge(0,1)$,
$2(\g,0) \wedge(0,\g)$, and $2(1,0)\wedge(0,\g)$, 
and so the $J$-invariant for this L-shaped table is 

\[2[(\g,0)\wedge(0,1) + (\g,0) \wedge(0,\g) + (1,0)\wedge(0,\g)]. \] 

In general the incidence matrix $N(\mc A_{2n})$ has $1$ on the diagonal 
and subdiagonal, and $0$ elsewhere: 
$\mat{1 & 0 & 0 & \cdots & 0&0\\
            1 & 1 & 0 & \cdots & 0&0\\
            0 & 1 & 1 & \cdots & 0&0\\
            \vdots&\vdots&\vdots&\ddots&\vdots&\vdots\\
               0 & 0 & 0 & \cdots &1& 0\\
            0 & 0 & 0 & \cdots &1& 1}$.
Thus $NN^\top =
 \mat{1 & 1 & 0 & \cdots &0 & 0 \\
	1 & 2 & 1 & \cdots &0 & 0 \\
	0 & 1 & 2 & \cdots &0 & 0\\ 
          \vdots&\vdots&\vdots&\ddots&\vdots&\vdots\\
             0 & 0 & 0 & \cdots & 2&	1\\
             0 & 0 & 0 & \cdots & 1&	2}$
has $(1, 2, \ldots, 2)$ along the diagonal, 
$1$ along the super- and subdiagonals, and $0$ elsewhere.

Note that up to reflections, the pattern of $1$s in $N$ is the same as 
the pattern of crossings in the flat structure.

\newpage
Shown below are configurations for the intersection graph $\mc A_{2n+1}$, 
$n=1,2,3$. 
The singularity data are $2e_{2n}$; that is, there are two $2n$-prong singularities 
and no others. (If $n = 1$, $\delta = 2e_2 = 0$.)
Again, the different $G(\mc A_{2n+1})$ have no commensurable pseudo-Anosovs. 
The singularity data doesn't rule out commensurable pseudo-Anosovs 
in $G(\mc A_{2m})$ and $G(\mc A_{2n+1})$ 
if $4m-2 = 2n$, so $n = 2m-1$. 
In this case the surface carrying the configuration corresponding to $\mc A_{2n+1}$ 
double covers that for $\mc A_{2m}$.
Commensurability between different elements of these groups must be ruled out in 
other ways.

\begin{figure}[h]
{
\includegraphics{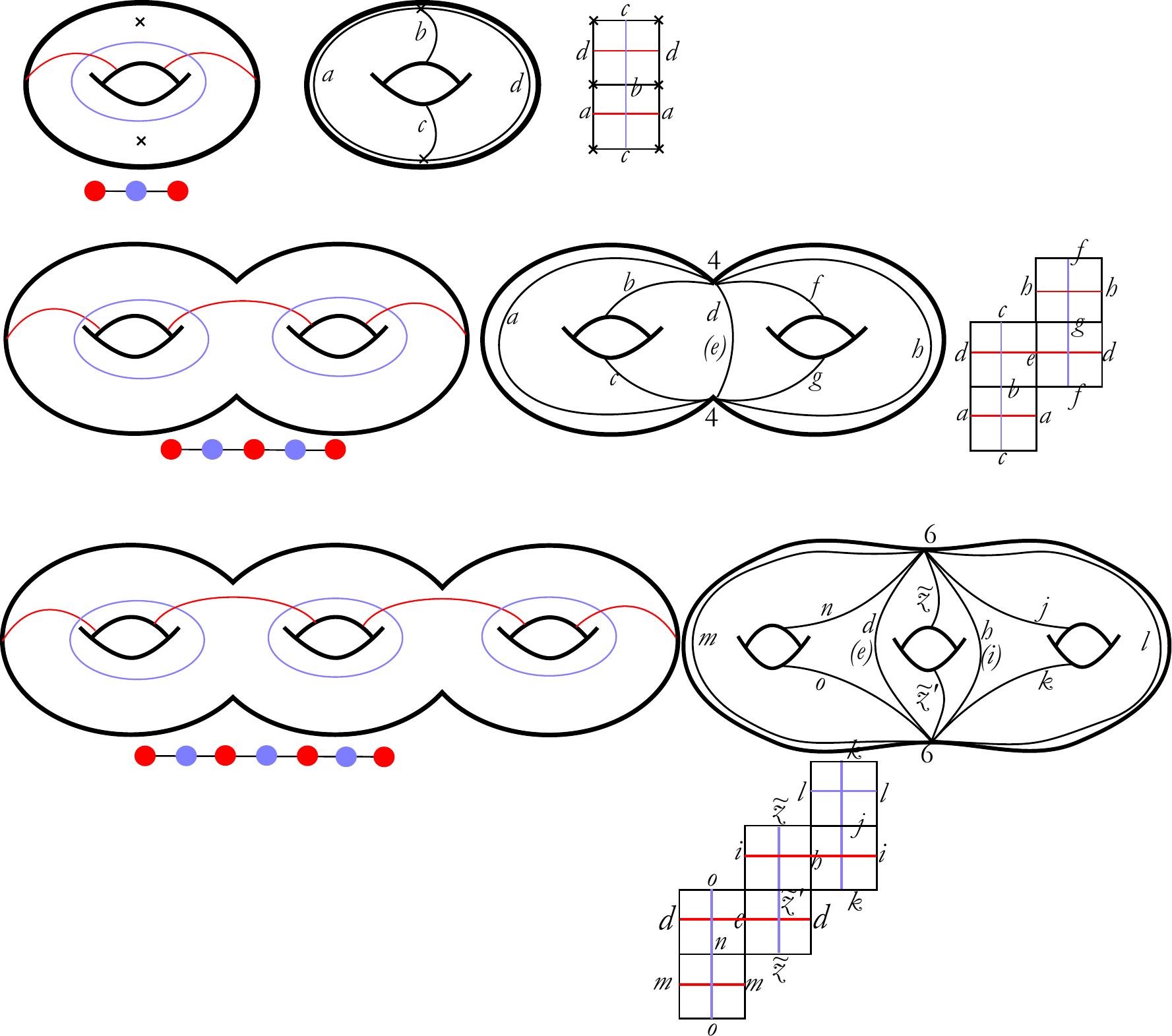}
}
\caption{$\mc A_{2n+1}$}
\end{figure}
The incidence matrix $N(\mc A_3) = \mat{1 \\1}$, 
so that $N^\top N = (2)$ and $\mu = 2$.
An eigenvector is $(1)$, 
and $2^{-1/2} N^\top (1) = \mat{2^{-1/2}\\2^{-1/2}}$, 
so the flat structure $X(a,b)$ is two rectangles of dimensions $2^{-1/2} \x 1$. 
The invariant $J$ is then $4 (2^{-1/2},0) \wedge (0,1)$.

The incidence matrix $N(\mc A_5) = 
\mat{1 & 0\\
	1 & 1\\
	0 & 1}$, 
so that $N^\top N = \mat{2&1\\1&2}$ and $\mu = 3$. 
An eigenvector is $v' = \mat{1\\1}$, 
and $3^{-1/2}N v = 3^{-1/2} \mat{1\\2\\1}$.
The invariant $J$ is then 
$4 (1,0)\wedge (0,1/\sqrt 3) + 4 (1,0) \wedge (0,2/\sqrt{3})$. 

In general, $N(\mc A_{2n+1})$ is a $(n+1) \x n$ matrix with $1$s  
on the diagonal and subdiagonal and $0$ elsewhere, 
so that $N^\top N$ is an $n \x n$ matrix with $2$ along the diagonal, 
$1$ on the super- and subdiagonals, and $0$ elsewhere:
$ \mat{2 & 1  & \cdots & 0 \\
	1 & 2 &  \cdots & 0 \\
          \vdots&\vdots&\ddots&\vdots\\
            0 & 0 & \cdots & 2}$. 
In particular $\mu(\mc A_7) = 2 + \sqrt 2$, 
so $N^\top N$ has eigenvector $v' = \mat{1 \\ \sqrt 2 \\ 1}$, 
and $NN^\top$ has eigenvector 
$v = (2+\sqrt 2)^{-1/2} \mat{1\\1+\sqrt 2\\ 1+ \sqrt 2 \\1}$.
The corresponding $J$-invariant is 
$ 4(1,0)\wedge(0,(2+\sqrt 2)^{-1/2}) 
+ 4(1,0)\wedge(0,\frac{1+\sqrt 2}{\sqrt{2+\sqrt {2}}})
+ 4 (\sqrt 2,0) \wedge (0,\frac{1+\sqrt 2}{\sqrt{2+\sqrt 2}})$. 
Comparing this with the L-shaped table of $\mc A_4$, 
we see that the covering of $\mc A_4$ by $\mc A_7$ is not a covering of flat 
surfaces, so pseudo-Anosovs in $G(\mc A_4)$ do not lift to elements of $\mc A_7$.

\newpage
Shown below are configurations for the intersection graph $\mc D_{2n}$, 
$n=2,3,4$. 
The singularity data are $\delta(\mc A_{2n+1}) = 2e_{2n} = \delta(\mc D_{2n+2})$,
so they don't rule out commensurability between pseudo-Anosovs in these groups.
Again, the $G(\mc D_{2n})$ for different $n$ have no 
commensurable pseudo-Anosovs, and they also have no pseudo-Anosovs 
commensurable with elements of the $G(\mc A_{2n})$.

\bigskip
\begin{figure}[h]
\includegraphics{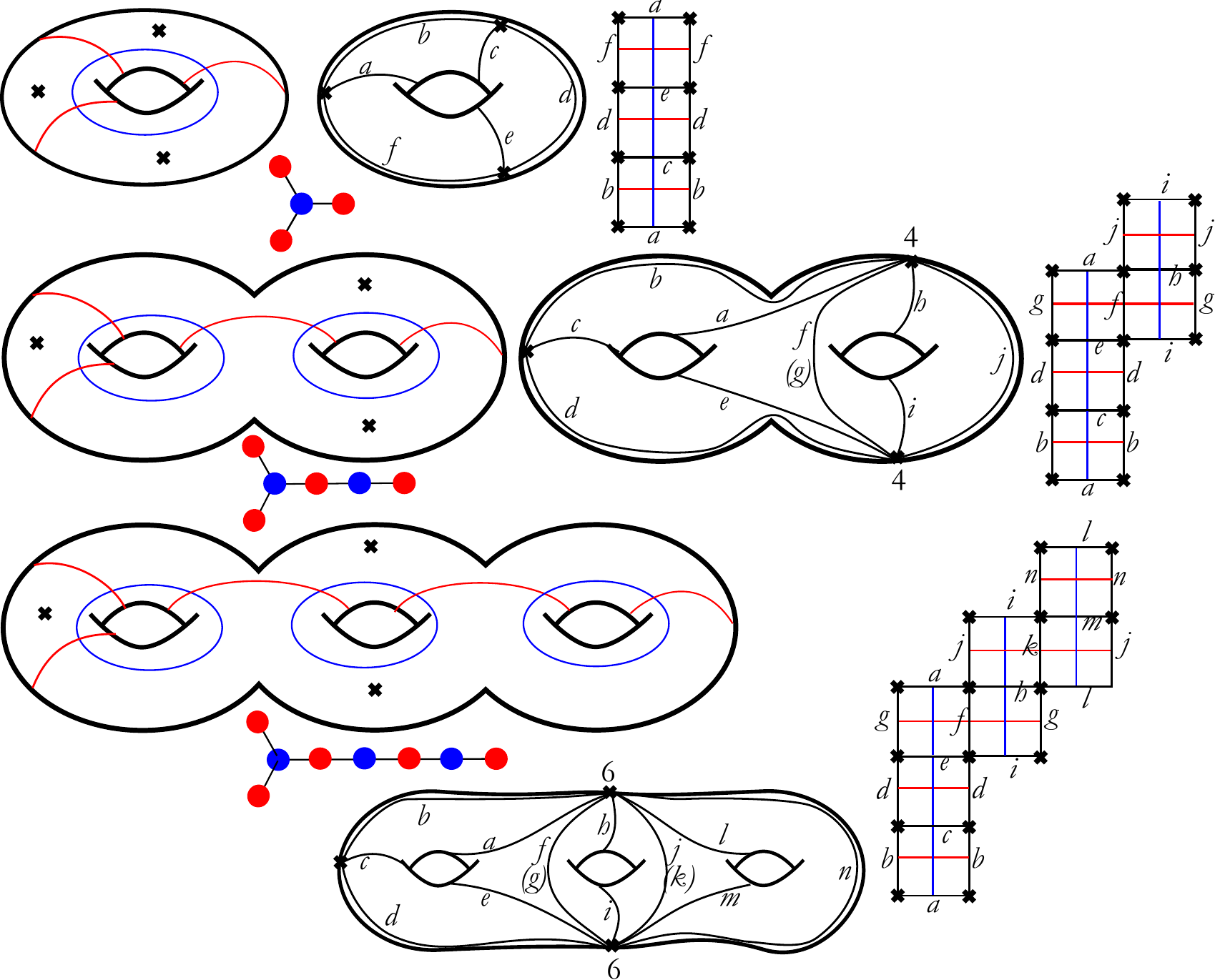}
\caption{
$\mc D_{2n}$
}
\end{figure}
$N(\mc D_4) = \mat{1\\1\\1}$, so $N^\top N = (3)$ and $\mu = 3$, 
and an eigenvector for $N^\top N$ is $v' = (1)$. 
Then $v = 3^{-1/2}Nv' = \mat{3^{-1/2} \\ 3^{-1/2} \\ 3^{-1/2}}$, 
so $J = 6(1,0) \wedge (0, 3^{-1/2})$.

$N(\mc D_6) = \mat{1&0\\1&0\\1&1\\0&1}$, 
so $N^\top N = \mat{3&1\\1&2}$ and $\mu = \frac 1 2(5 + \sqrt 5)$. 
In general, $N$ is as predicted by the flat structure, 
and $N^\top N$ has $(3, 2, \ldots, 2)$ along the diagonal and $1$ along 
the super- and subdiagonals:
$ \mat{3 & 1 & 0 & \cdots &0 & 0 \\
	1 & 2 & 1 & \cdots &0 & 0 \\
	0 & 1 & 2 & \cdots &0 & 0\\ 
          \vdots&\vdots&\vdots&\ddots&\vdots&\vdots\\
             0 & 0 & 0 & \cdots & 2&	1\\
             0 & 0 & 0 & \cdots & 1&	2\\}$.
The $\mc D_{2n}$ groups can trivially be seen to not have commensurable 
elements with elements of the $\mc A_j$ groups because the former 
exist on a punctured surface. If punctures are added to the $\mc A_j$ 
surfaces, other invariants must be used. 

\newpage
Shown below are configurations for the intersection graph $\mc D_{2n+1}$, 
$n=2,3$. 
The singularity data are $\delta = e_{4n-2}$, 
the same as that for $G(\mc A_{2n})$.
Again, the $G(\mc D_{2n+1})$ for different $n$ have no 
commensurable pseudo-Anosovs, and they also have no pseudo-Anosovs 
commensurable with elements of the $G(\mc A_{2n+1})$.

\bigskip
\begin{figure}[H]
{
\includegraphics{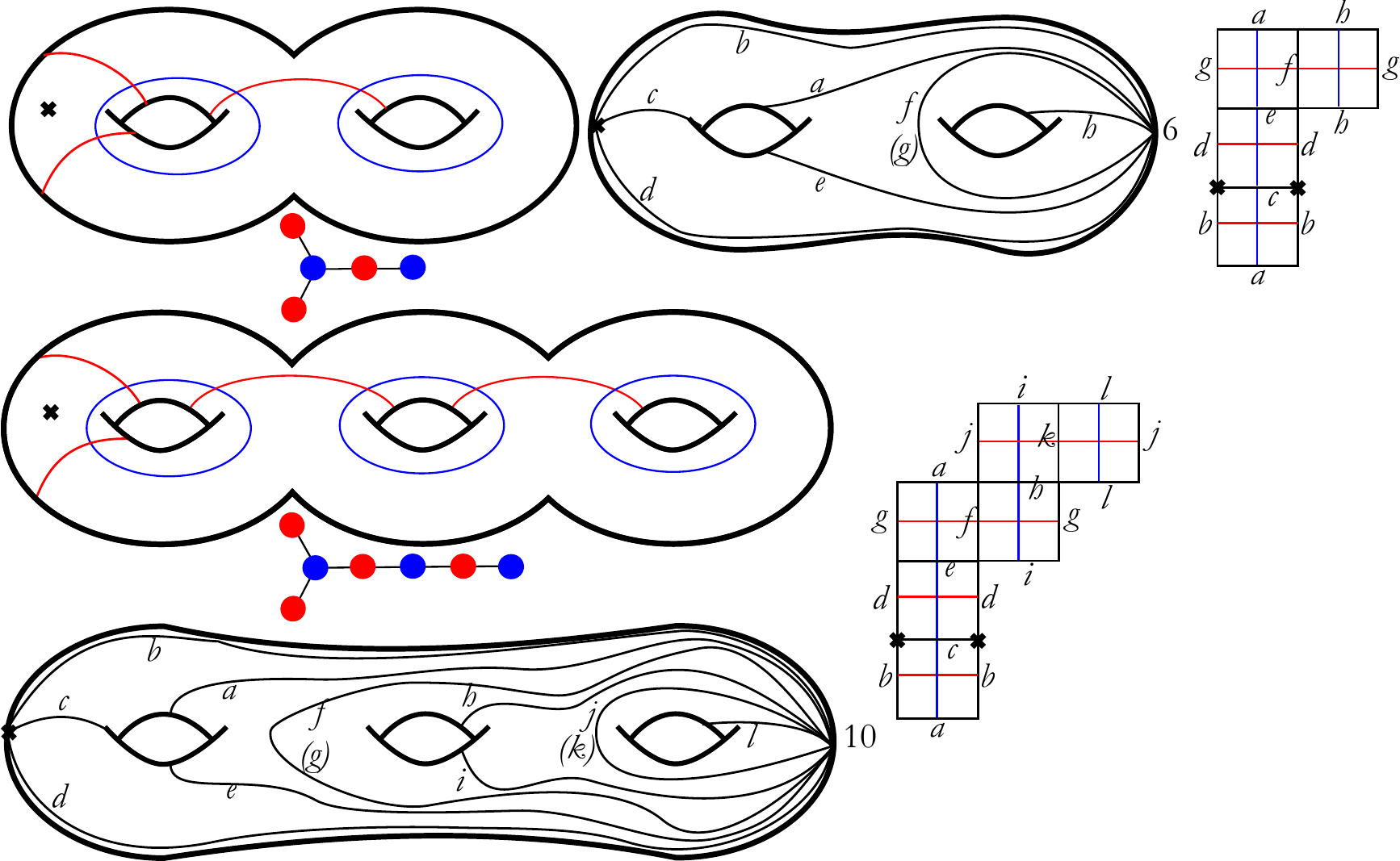}
}
\caption{
$\mc D_{2n+1}$
}
\end{figure}
$N(\mc D_5)^\top = \mat{1 & 1 & 1\\0 & 0 & 1}$, 
so $N^\top N = \mat{3 & 1 \\ 1 & 1}$ and $\mu(\mc D_5) = 2 + \sqrt 2 = \mu(\mc A_7)$.
$\mu(\mc D_7) = 2 + \sqrt 3$.
$N$ is as predicted by the flat structure, 
and $N^\top N$ has $(3, 2, \ldots, 2, 1)$ along the diagonal and $1$ along 
the super- and subdiagonals:
$ \mat{3 & 1 & 0 & \cdots &0 & 0 \\
	1 & 2 & 1 & \cdots &0 & 0 \\
	0 & 1 & 2 & \cdots &0 & 0\\ 
          \vdots&\vdots&\vdots&\ddots&\vdots&\vdots\\
             0 & 0 & 0 & \cdots & 2&	1\\
             0 & 0 & 0 & \cdots & 1&	1\\}$.

\newpage
Our next graphs are $\mc E_{2n}$, $n \geq 3$.
Shown are the cases $n=3,4,5$. 
Since $a \cup b$ does not separate, 
$\delta = e_{4n-2}$, and the different $G(\mc E_{2n})$ 
have no commensurable pseudo-Anosovs.
$G(\mc E_6)$ and $G(\mc E_{8})$ are not free; all others are.

\begin{figure}[p]
{
\includegraphics{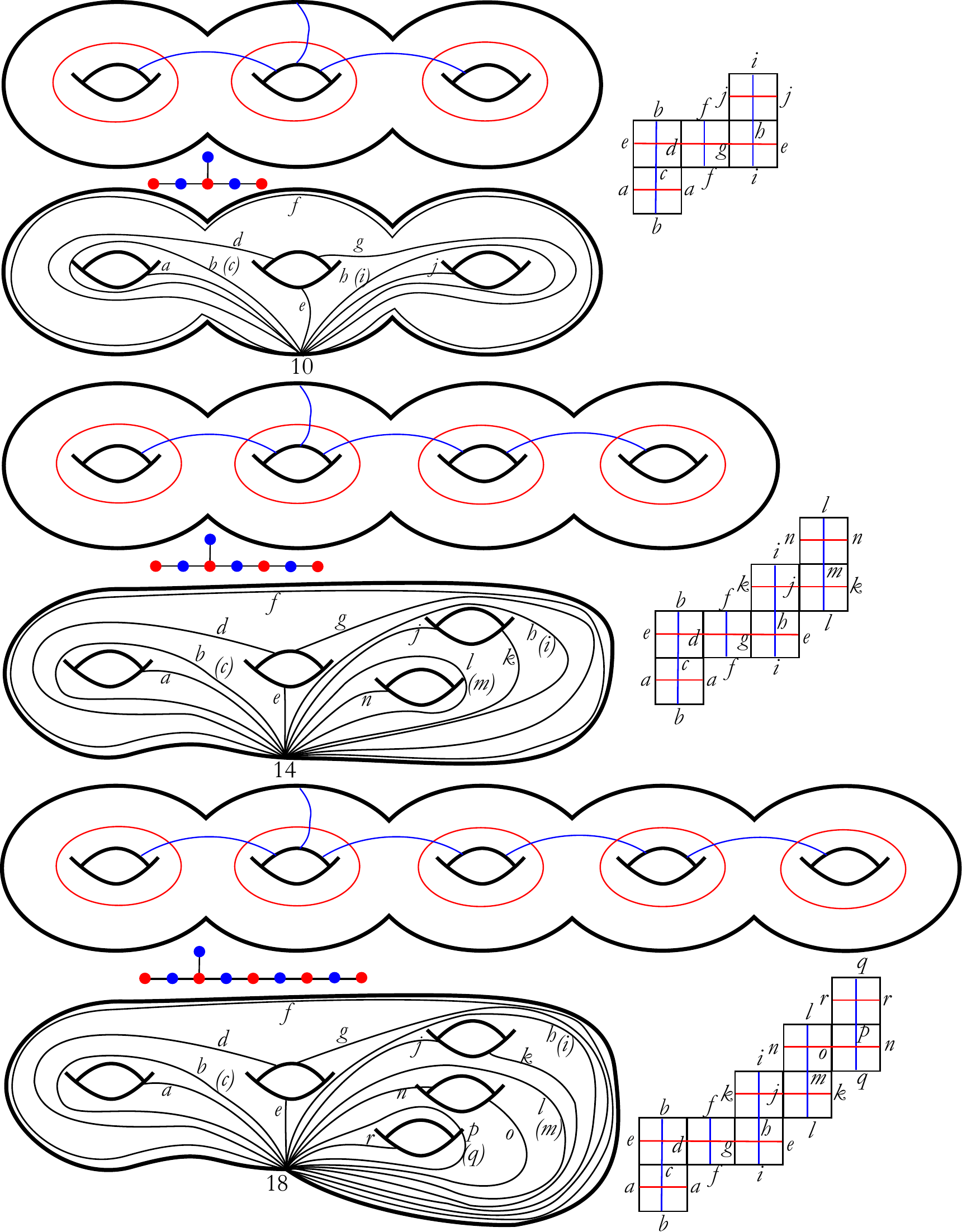}
}
\caption{
$\mc E_{2n}$
}
\end{figure}

\nd $\mu(\mc E_6) = 2 + \sqrt 3$ and in general $NN^\top$
has $(1, 3, 2, \ldots, 2, 1)$ along the diagonal and $1$ along 
the super- and subdiagonals:
$ \mat{1 & 1 & 0 & \cdots &0 & 0 \\
	   1 & 3 & 1 & \cdots &0 & 0 \\
	   0 & 1 & 2 & \cdots &0 & 0\\ 
            \vdots&\vdots&\vdots&\ddots&\vdots&\vdots\\
             0 & 0 & 0 & \cdots & 2&	1\\
             0 & 0 & 0 & \cdots & 1&	1\\}$.
$T_a T_b$ for $\mc E_{10}$ 
turns out to have the smallest $\l$ among all two-multitwist pseudo-Anosovs, 
and this is Lehmer's number.

\newpage
Consider the graphs $\mc E_{2n+1}$, $n \geq 3$. 
Shown is $\mc E_7$. 
We have $\delta = e_{2(n-1)} + e_{2(n+1)}$, so and the different $G(\mc E_{n})$ 
have no commensurable pseudo-Anosovs.
$G(\mc E_7)$ is not free; 
higher $G(\mc E_{2n+1})$ are.

\bigskip
\begin{figure}[H]
{
\includegraphics{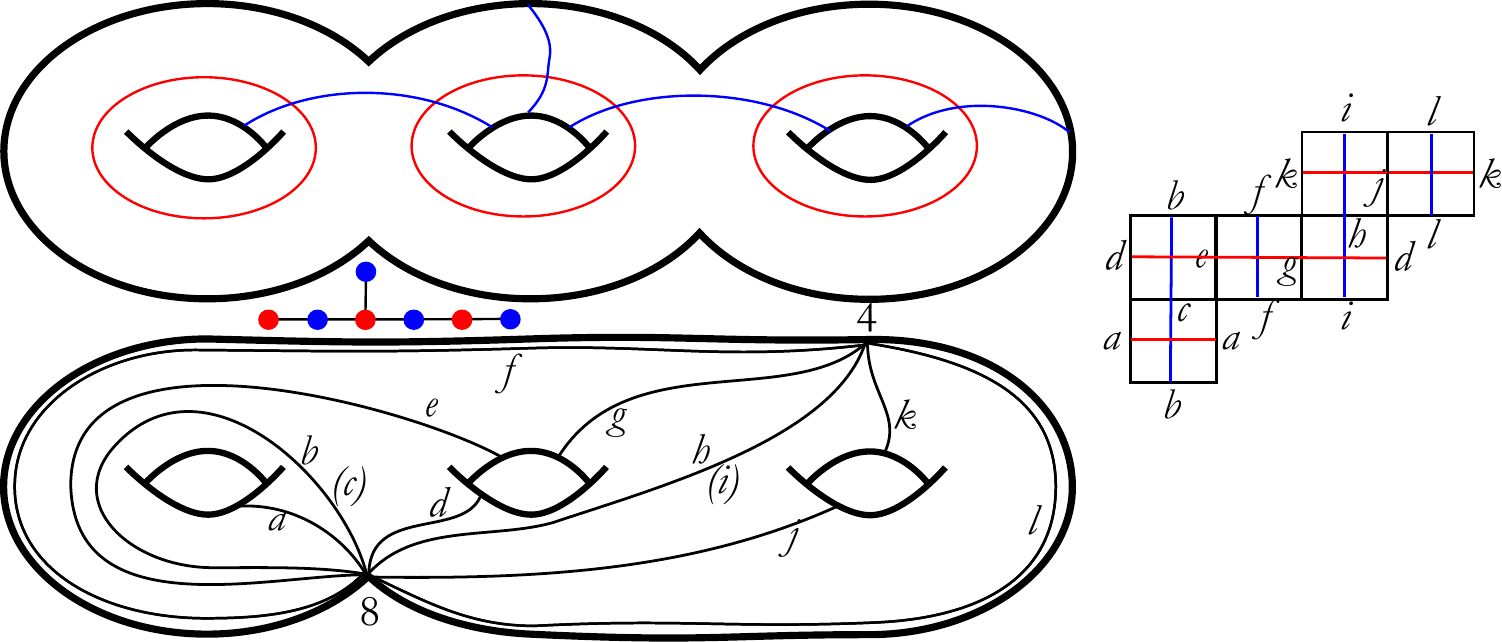}
}
\caption{
$\mc E_{2n+1}$
}
\end{figure}

$\mu(\mc E_7)$ is a root of $x^3 - 6x^2 + 9x - 3$,
and in general $NN^\top$
has $(1, 3, 2, \ldots, 2)$ along the diagonal and $1$ along 
the super- and subdiagonals:
$ \mat{1 & 1 & 0 & \cdots &0 & 0 \\
	   1 & 3 & 1 & \cdots &0 & 0 \\
	   0 & 1 & 2 & \cdots &0 & 0\\ 
            \vdots&\vdots&\vdots&\ddots&\vdots&\vdots\\
             0 & 0 & 0 & \cdots & 2&	1\\
             0 & 0 & 0 & \cdots & 1&	2\\}$.

\newpage
These graphs are called $\mc P_{2n}$. 
Shown are the cases $n=1,2,3,4$. 
We have $\delta = 0$ for the first two, and $\delta = 4e_{n}$ 
for $n \geq 3$.
Thus none of the different groups have commensurable pseudo-Anosovs.

\bigskip

\begin{figure}[H]
{
\includegraphics{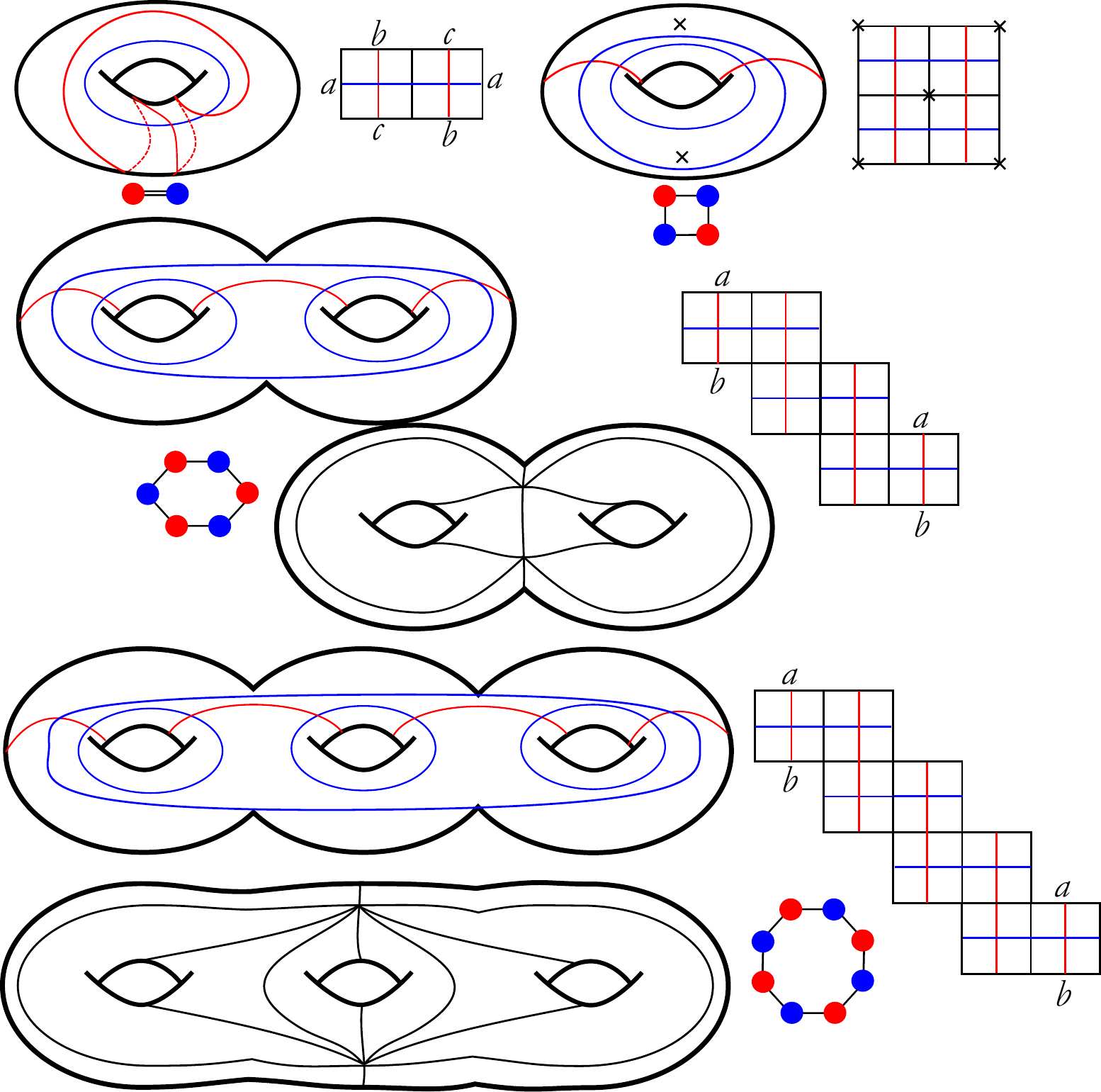}
}
\caption{
$\mc P_{2n}$
}
\end{figure}

For $n = 1$ we have $N = (2)$, so $NN^\top = (4)$ and $\mu = 4$. 
$v = (1)$ is an eigenvector, so $v' = \mu^{-1/2} N^\top v = \frac 1 2 (2)(1) = (1)$ 
is an eigenvector for $N^\top N$, and $J = 4(1,0) \wedge (0,1)$.

For $n=2$ we have $N = \mat{1 & 1 \\ 1 & 1}$, 
so $NN^\top = \mat{2 & 2 \\ 2 & 2}$. Thus again $\mu = 4$. 
$v = v' = \mat{1\\1}$ is an eigenvector, so 
$J = 8 (1,0) \wedge (0,1)$. 

For higher $n$ we have $N = \mat{1 & 1 & 0 & \cdots & 0\\
						      0 & 1 & 1 & \cdots & 0\\
						      0 & 0 & 1 & \cdots & 0\\
						      \vdots & \vdots & \vdots & \ddots & \vdots \\
						      1 & 0 & 0 & \cdots & 1}$, 
 so that
				$NN^\top =  \mat{2 & 1 & 0  & 0 & \cdots & 0 & 1\\
						      1 & 2 & 1 & 0 & \cdots & 0 & 0\\
						      0 & 1 & 2 & 1 & \cdots & 0 & 0\\
			\vdots & \vdots & \vdots & \vdots & \ddots & \vdots & \vdots \\
						      1 & 0 & 0 & 0 & \cdots & 1 & 2}$. 
In all cases, $\mu = 4$, with eigenvector $v = v' = \mat{1 \\ \vdots \\ 1}$,
so $J = 4n (1,0) \wedge (0,1)$.

These graphs are called $Q_{2n+1}$, $n \geq 2$. 
Shown are the cases $n = 2,3,4$. 
They have $\delta = 2 e_{2n-2}$ and $\mu = 4$.

\bigskip

\begin{figure}[H]
{
\includegraphics{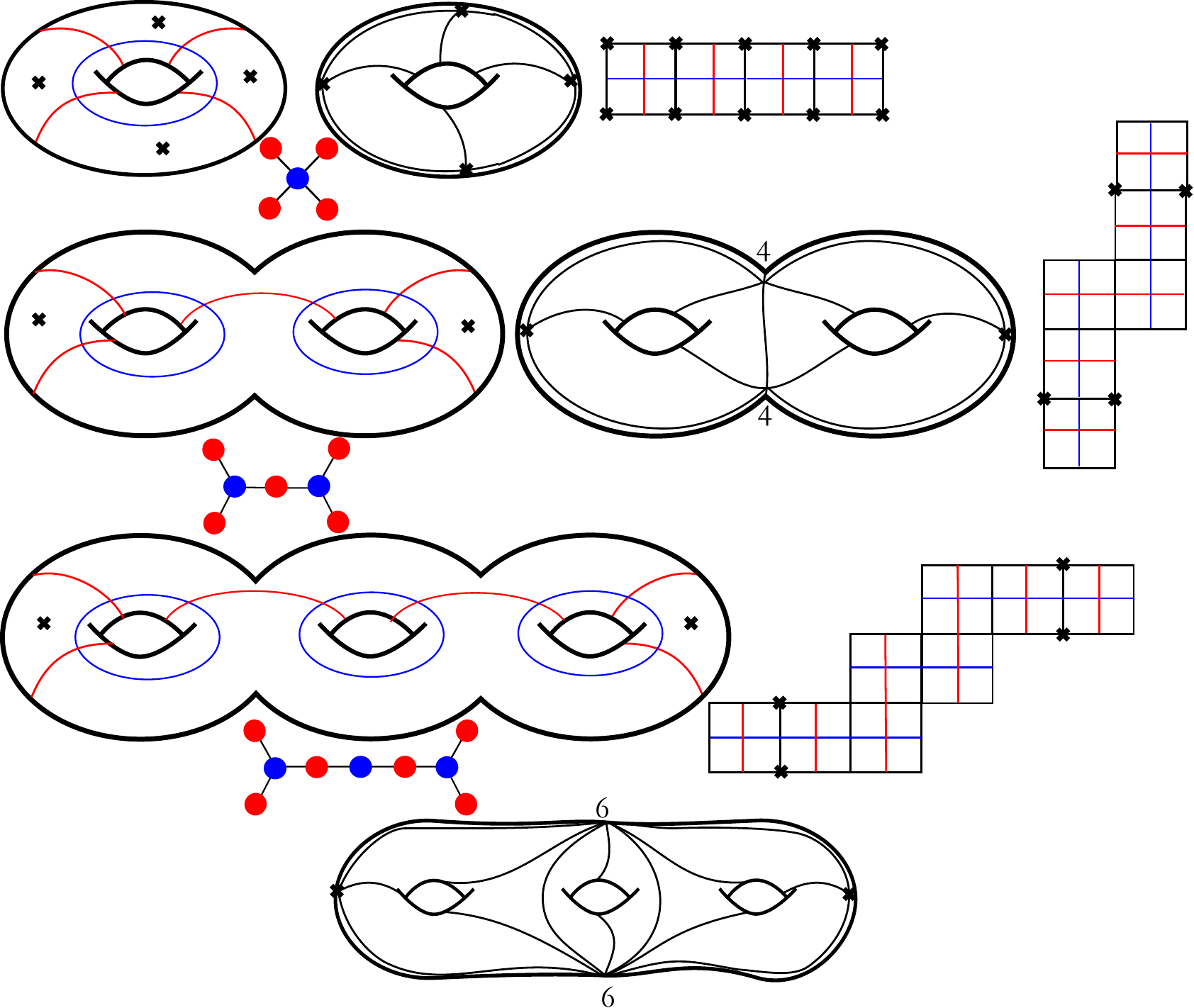}
}
\caption{
$\mc Q_{2n+1}$
}
\end{figure}

\newpage

These graphs are $Q_{2n}$, $n \geq 3$. 
Shown are the cases $n = 3,4$. 
They have $\delta = 2 e_{2n-3}$ and $\mu = 4$.

\bigskip
\begin{figure}
{
\includegraphics{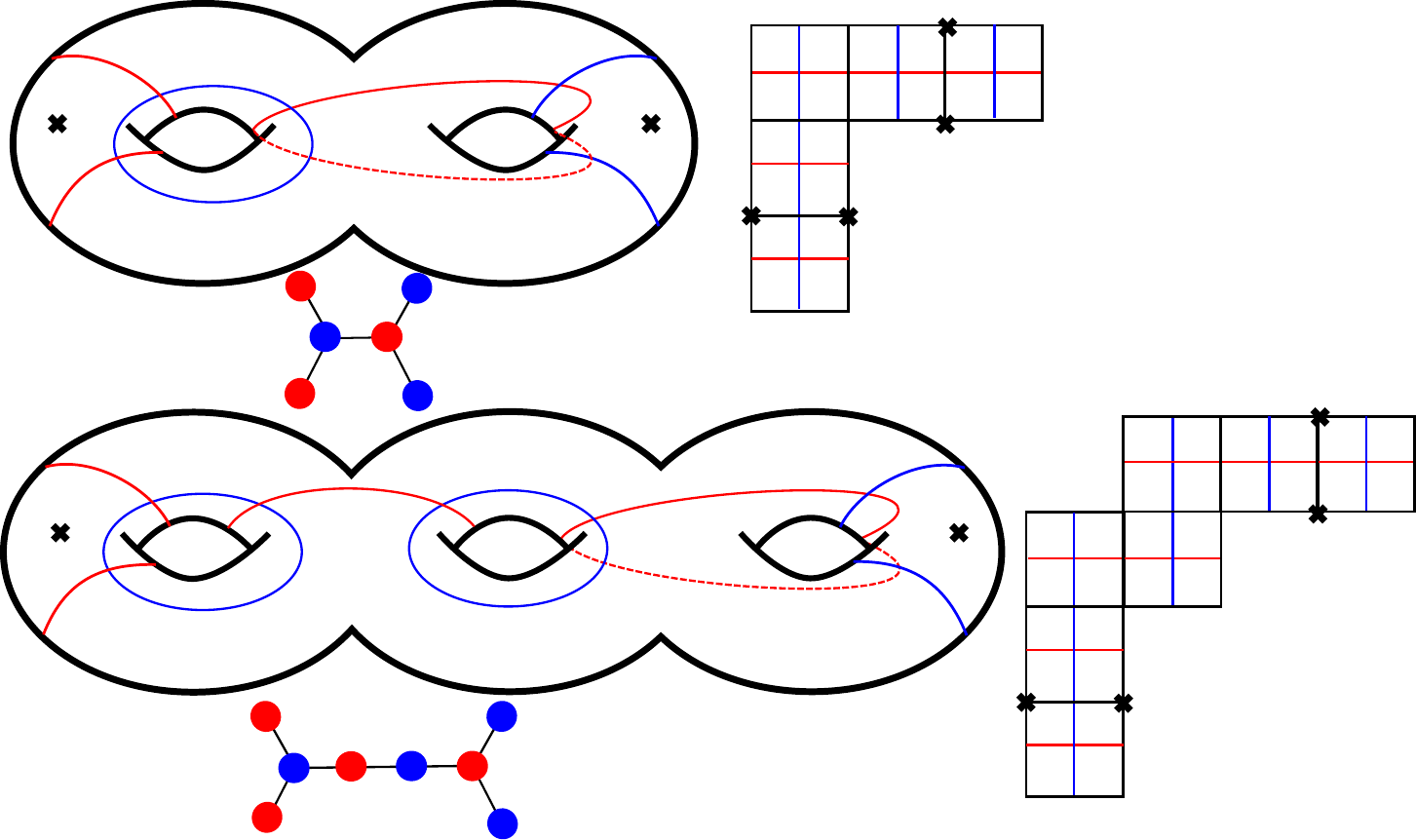}
}
\caption{
$\mc Q_{2n}$
}
\end{figure}

\newpage

Finally, the three graphs $\mc R_7, \mc R_8, \mc R_9$ are below. 
They are the only other connected graphs with $\mu = 4$. 
We have $\delta = 2e_6$, $2e_4 + e_6$, and $e_6 + e_{10}$ respectively 
in the three cases.

\bigskip
\begin{figure}[H]
{
\includegraphics{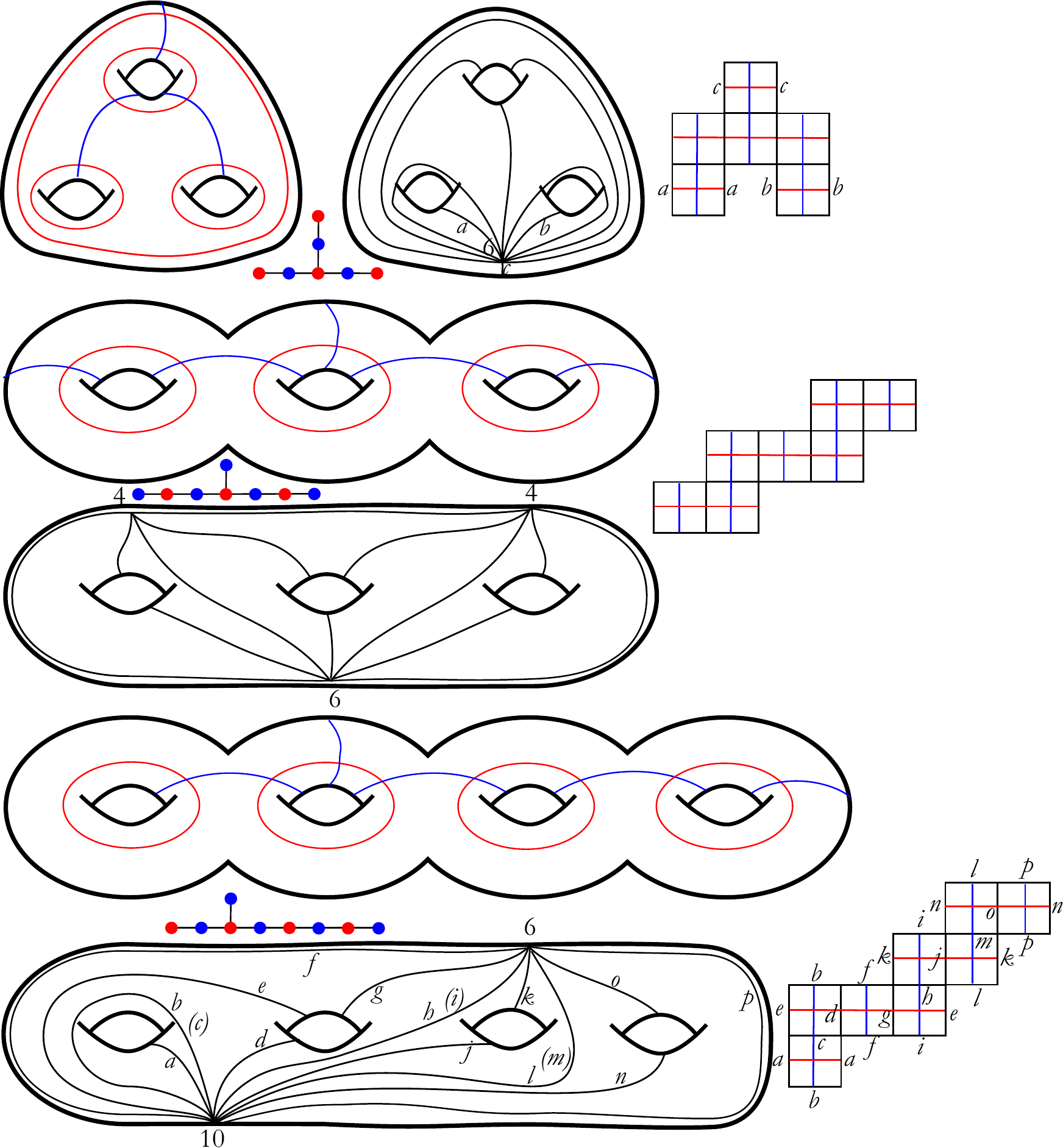}
}
\caption{
$\mc R_7,\mc R_8,\mc R_9$
}
\end{figure}

\newpage
Here are some more multicurve configurations, 
which have no canonical names.

\bigskip
\begin{figure}[H]
{
\includegraphics{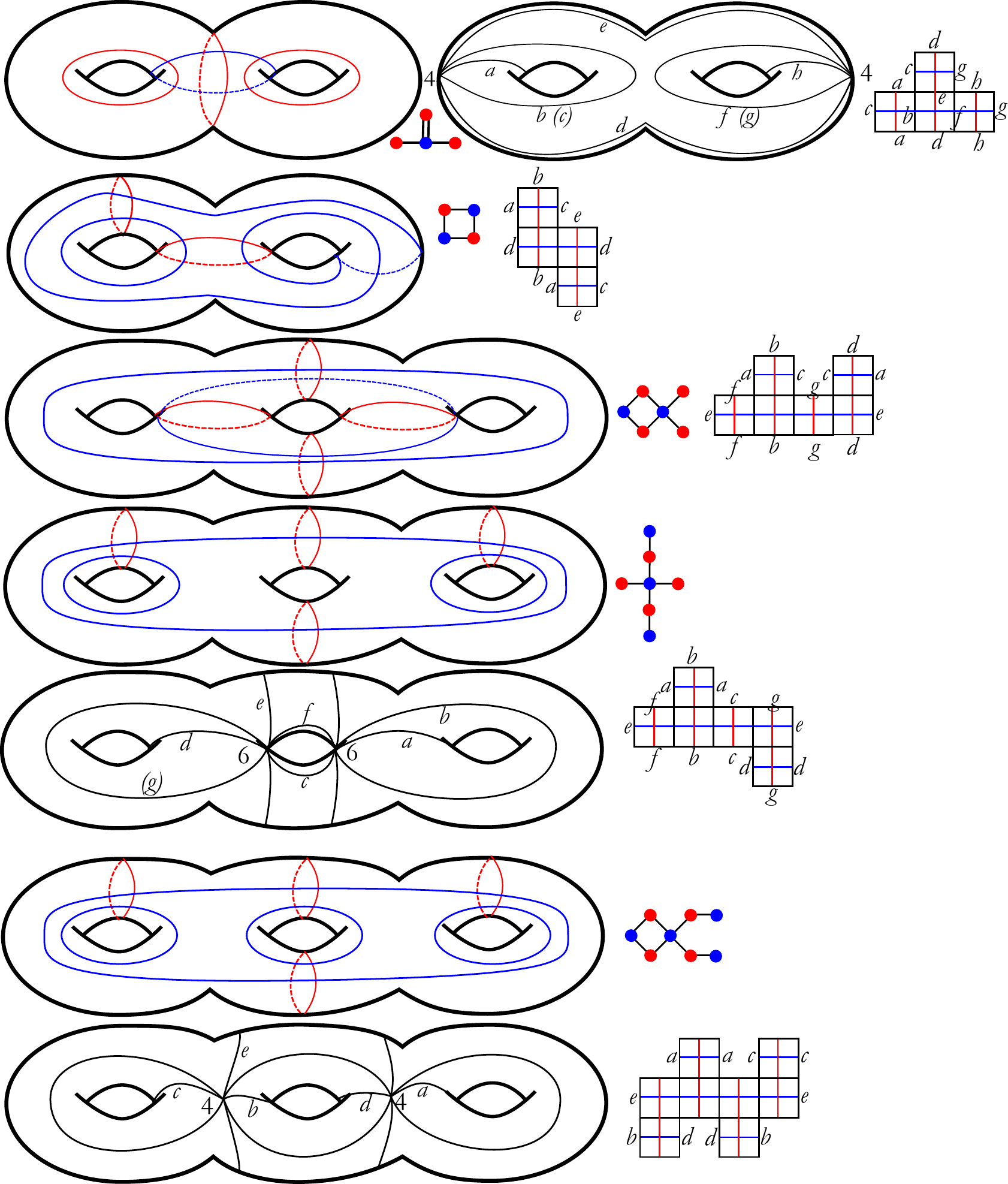}
}
\caption{Miscellaneous curve configurations}
\end{figure}

The first has $\delta = 2e_4$ and $\mu = 6$. 
The second has $\delta = 2e_4$ and $\mu = 4$.
The third has $\delta = 2e_6$ and $\mu = 3 + \sqrt 5$. 
It double covers a genus-two surface with an $\mc A_4$ configuration, 
and we shall meet (a configuration homeomorphic to) it again below.
The fourth has $\delta = 2e_6$ and $\mu = \frac 1 2 (5 + \sqrt{17})$, 
and again double covers a genus-two surface with an $\mc A_4$ configuration.
The last has $\delta = 4e_4$ and $\mu = \frac 1 2(7 + \sqrt{17})$.
It double covers an $\mc A_5$ configuration on a genus-two surface.

\newpage
These miscellaneous graphs have no names.
Their $\delta$s are respectively
$e_{14}$,  \quad
$e_{14}$,  \quad 
$e_6 + e_{10}$,  \quad and
$3e_6$.  

\bigskip
\begin{figure}[H]
{
\includegraphics{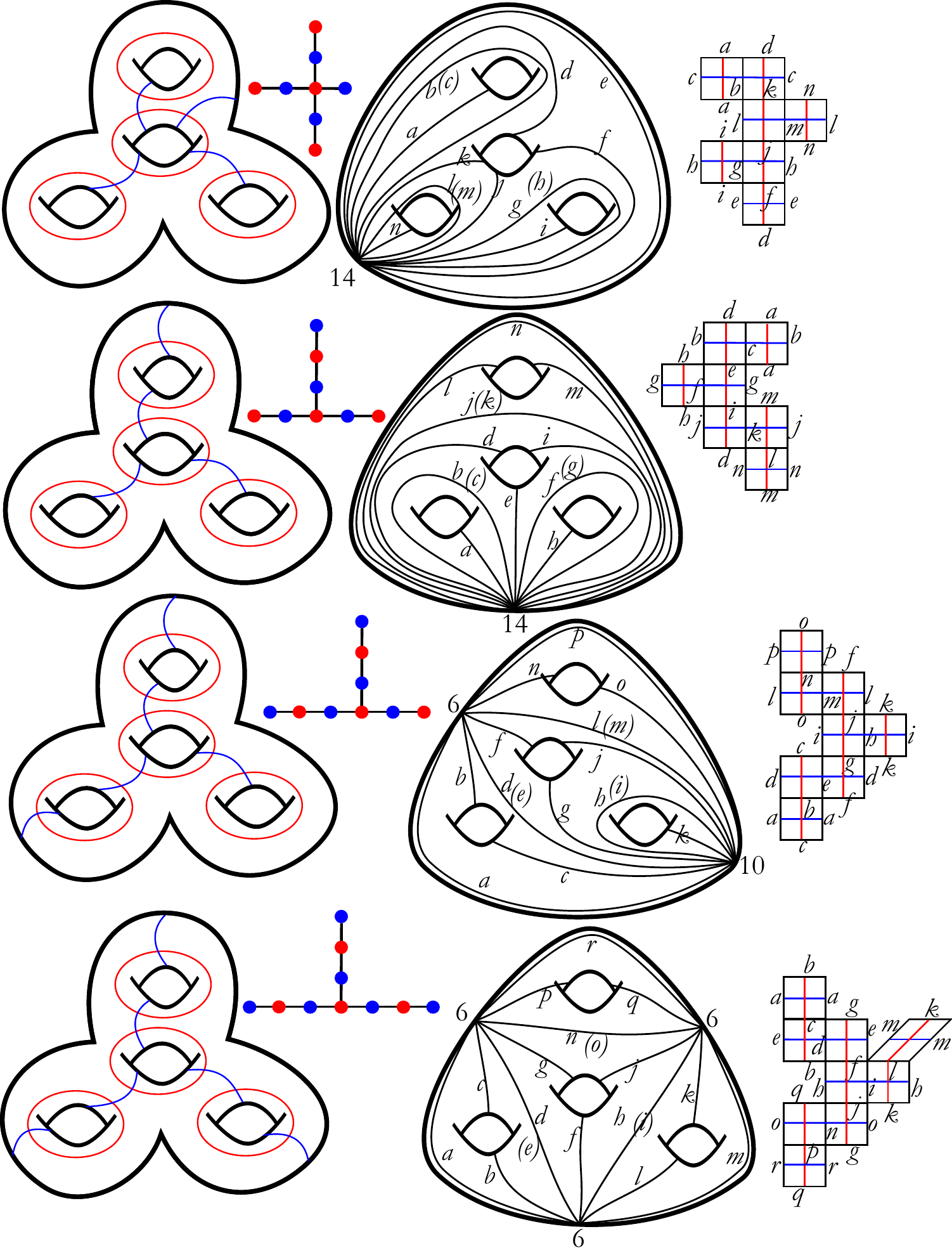}
}
\caption{More miscellaneous curve configurations}
\end{figure}

\nd Their $\mu$ are 
$\frac 1 2(5 + \sqrt{21}), \quad$ 
the greatest root of $x^3 - 6x^2 + 8x - 1, \quad$
the greatest root of $x^3 - 6x^2 + 8x - 2, \quad$ and
$\frac 1 2(5 + \sqrt{13})$.
\bigskip

\newpage
Here are some more miscellaneous graphs that don't have specific names.
Their singularity data are respectively
$2e_3 + 2e_5$,  \quad
$2e_4 + 2e_6$, \quad
$4e_3 + 2e_6$,  \quad
$e_{14}$.

\bigskip
\begin{figure}[H]
{
\includegraphics{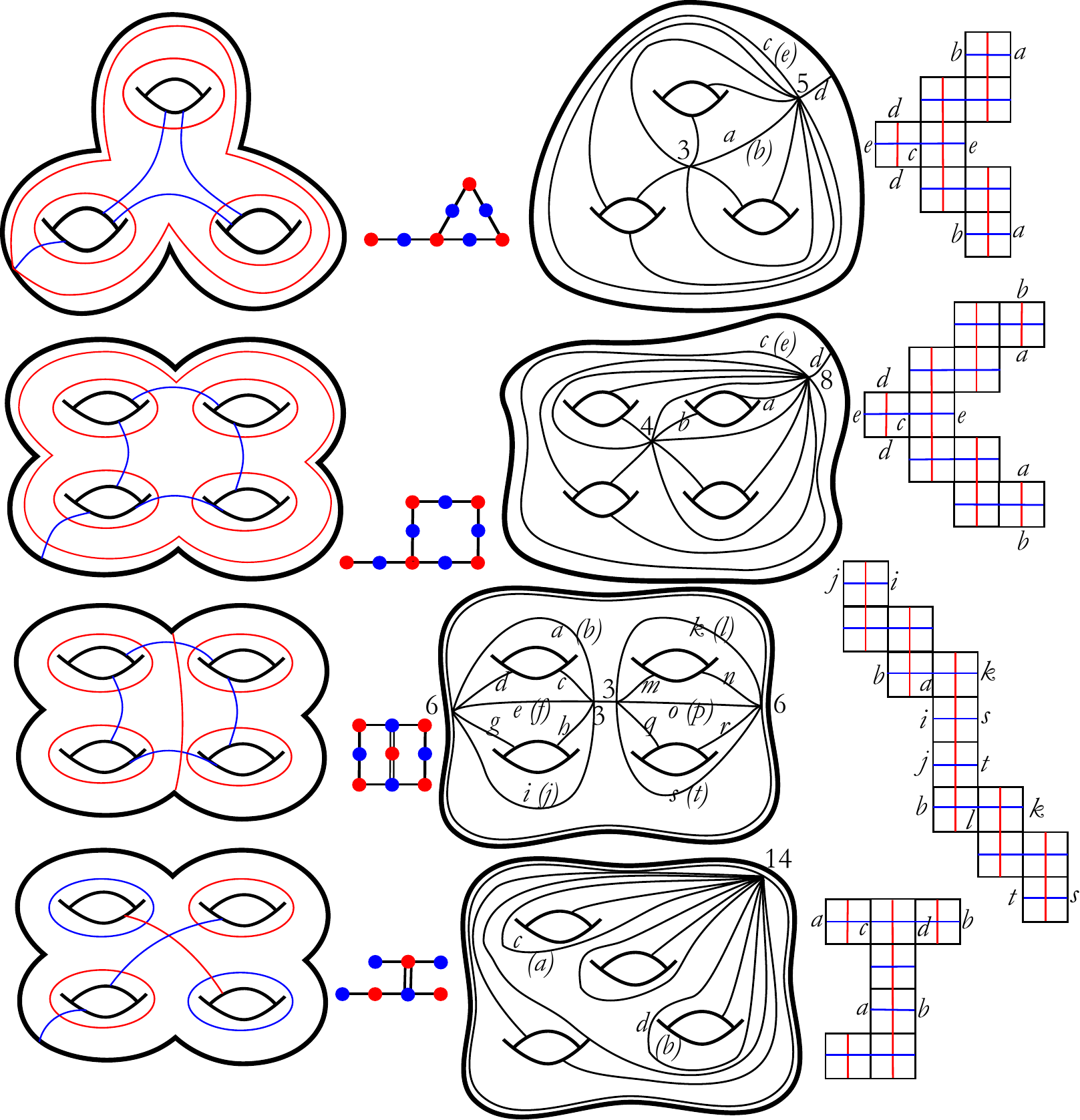}
}
\caption{Even more miscellaneous curve configurations}
\end{figure}
Their $\mu$ are
$\frac 1 2(5 + \sqrt{17})$, \quad
the greatest root of $x^3 - 8x^2 + 18x - 10$, \quad
$6 + \sqrt{20}$, \quad and 
the largest root of $x^3 - 9x^2 + 11x - 2$.

\newpage

A nice class of examples is given by pairs of curves that fill a surface. 
The first three are part of an infinite sequence, and the last is the beginning of 
a different infinite family.

\begin{figure}[H]
\includegraphics{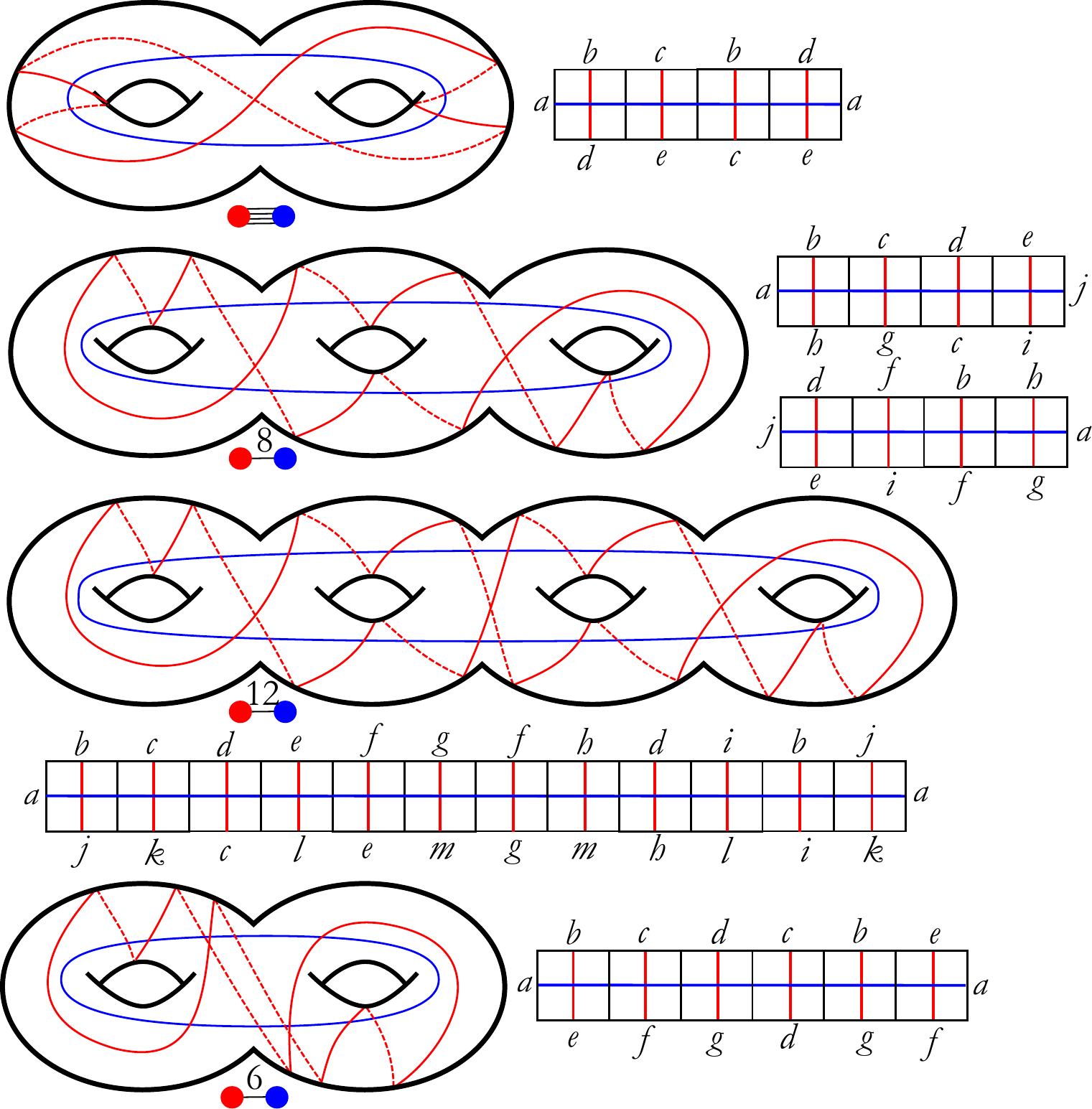}
\caption{pairs of filling curves}
\end{figure}

Their $N$ are 1-by-1 matrices $(n)$ with $n$ the number of intersection points: 
$n = 4(\mbox{genus}-1)$ for the infinite series, and $n=6$ for the last one.
Thus for the sequence, $NN^T = (n^2) = ([4(\mbox{genus}-1)]^2)$, 
which of course has eigenvalue $\mu = n^2$, 
and for the last one, $\mu = 36$. 
They all have eigenvector $v = (1)$, so that $v' = \mu^{-1/2} N^T v 
= \frac 1 n (n)(1) = 1$, and $J = 2n(1,0) \wedge (0,1)$. 

One can calculate their $\delta$ by rotating around vertices in the flat structure
while counting angles, or by reasoning as follows. 
Each square in the flat structure has $2\pi$ of angle inside it,
so there is $4 \cdot 2\pi = 8\pi$ of total angle around vertices in the first one.
A rotation of $\pi$ exchanges the components of the cut surface, 
so there are two vertices each with angle $4\pi$, and $\delta = 2e_4$.
For the rest, one can notice that cutting along the curves results 
in two pieces at the ends, homeomorphic to the ones before, 
and $2(g-2)$ pairwise equivalent components in between.
Since there is $4(g-1) \cdot 2\pi = 8(g-1)\pi$ total angle 
around all vertices, removing the two end pieces leaves $8(g-2)\pi$ 
of angle to be equally divided among $2(g-2)$ vertices, 
meaning each has angle $4\pi$, and $\delta = 2(g-1)e_4$. 
The last example, on cutting, falls into four components, 
one, front and center, 
visibly a square (giving rise to a vertex of angle $2\pi$), 
and another, approximately behind it, a rectangle.
Thus the remaining two vertices have combined angle 
$6 \cdot 2\pi - 2 \cdot 2\pi = 4 \cdot 2\pi$, 
so by symmetry, each has angle $4\pi$, and $\delta = 2e_2 + 2e_4 = 2e_4$.

\newpage
Here is a different construction of two curves filling a surface, 
for closed surfaces of genus $\geq 3$,
generalizing the six-intersection pair on the last page. 
One takes genus-many of the one handled objects below 
and glues them to the singular surface next to them to produce 
a surface with two curves, illustrated to the right in the genus-three case. 
They have $\delta = \mbox{genus }e_4 + 2 e_{\mr{genus}}$ 
and $\mu = (3\cdot \mbox{genus})^2$.

\bigskip
\begin{figure}[H]
\includegraphics{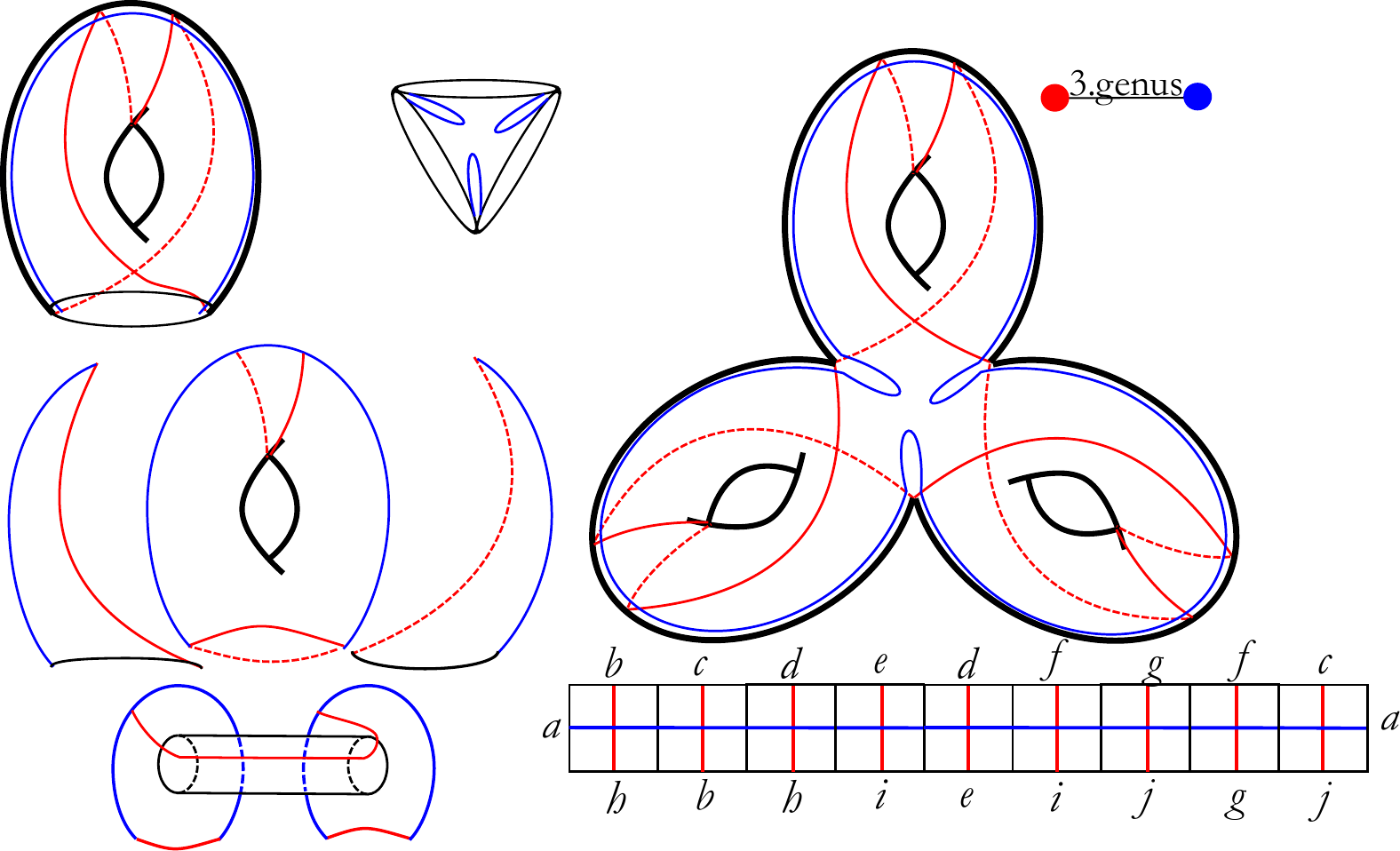}
\caption{More pairs of filling curves}
\end{figure}
\bigskip

Consider the one-handled surface. On cutting along the blue arc, 
the red arc is divided into three segments, whose endpoints along 
what was the blue arc can be moved independently. 
Thus, on cutting along the red arcs, one gets three surfaces, 
shown below the one handled surface, all of them topological disks.
The middle one becomes a cylinder on cutting the blue curve, and 
cutting the remaining red arc makes it a disc.

To see that the glued together surface with the two curves is filled by the curves, 
again one first cuts along the blue curve. The red arcs' endpoints 
on the boundary of the picture are now free to move around, 
so one can move them to the configuration below. 

\bigskip
\begin{figure}[H]
\includegraphics{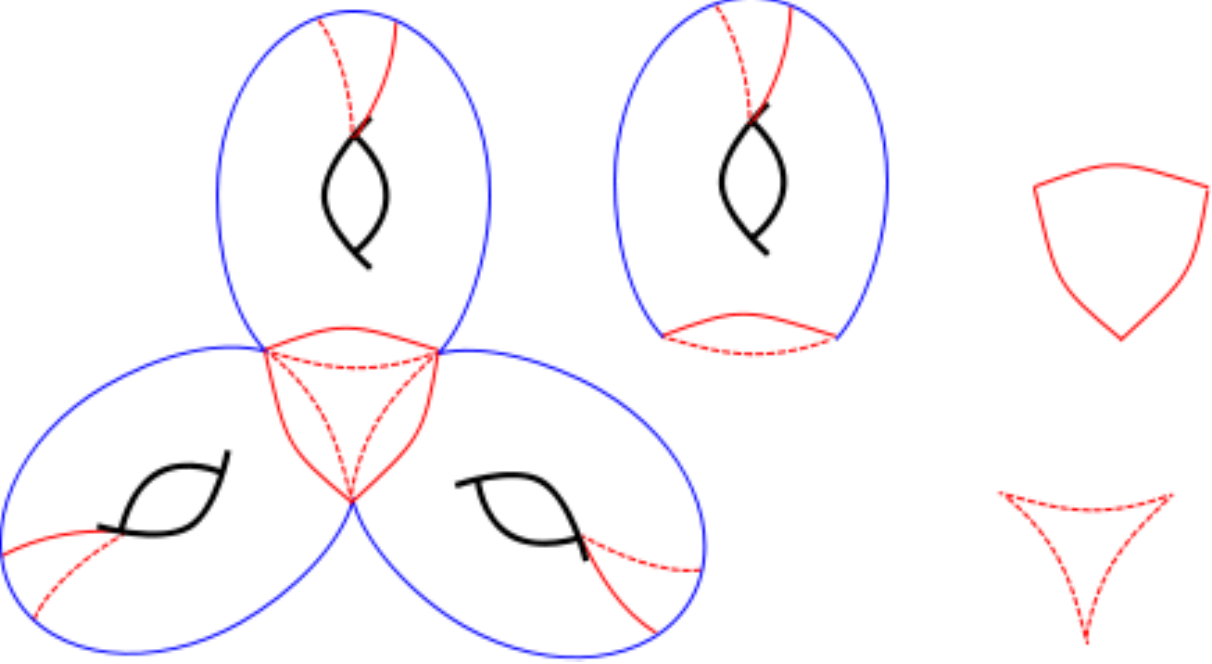}
\caption{Showing the curves fill}
\end{figure}
\bigskip

Now cutting along the red arc gives two triangular discs in the middle 
and genus-many ``handle''-shaped objects, 
which we saw before are actually disks.

\section{A construction of some commensurable pseudo-Anosovs}
\label{construction}

The techniques of the last section allow us to conclude that several accessible 
groups contain no commensurability classes of pseudo-Anosovs in common. 
We are able, however, to construct some commensurable pseudo-Anosovs 
in different $G(a,b)$. Basically, the strategy is to lift Dehn twists 
and use that $p_\#$ is a homomorphism.
We first adopt a strategy that yields compositions of twists 
covering other compositions of twists; 
but these will not lie in two-multitwist groups $G(a,b)$.
On seeing why this strategy does not yield a covering of elements of one $G(a,b)$ 
group by another, a modification that does will become apparent.

In the picture below, we have a cyclic double cover $p\colon \wt S_2 \to S$; 
the covering symmetry 
rotates $\wt S_2$ counterclockwise by an angle of $\pi$ around the axis 
extending vertically through the middle handle. 
Put another way, to get $\wt S_2$ from $S$, cut along $z$, clone the resulting 
doubly-punctured torus, 
and glue the two cut objects together along their boundaries in such a way that 
the resulting action of $\mathbb{Z}/2\mathbb{Z}$ is free.. 

\begin{figure}[H]
\includegraphics{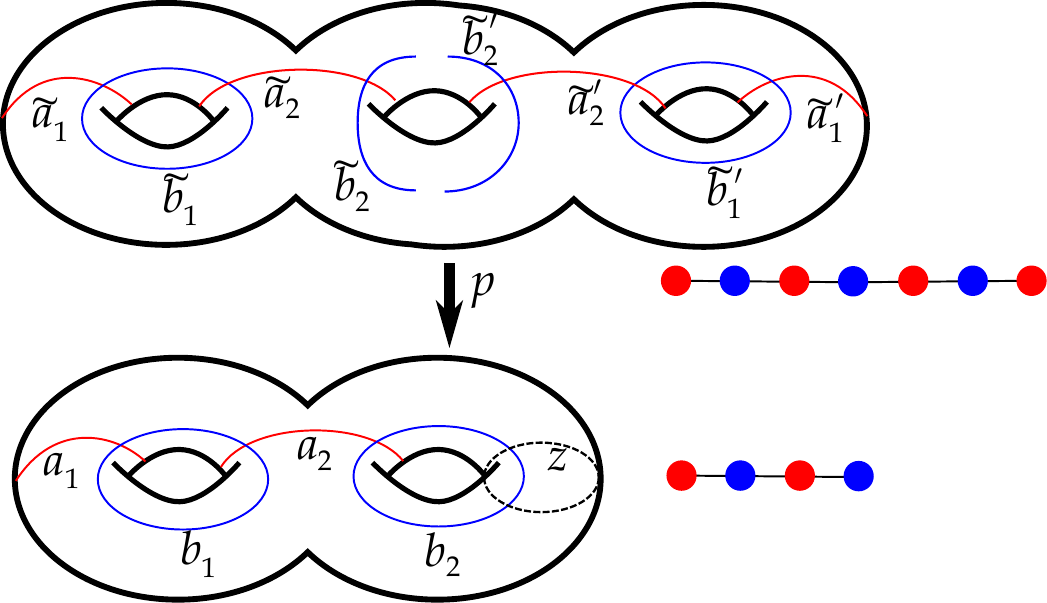}
\caption{A two-fold covering}
\end{figure}

We draw multicurves $a,b$ in an $\mc A_4$ configuration on $S$.
Note that the curves $a_1$, $a_2$, and $b_1$ lift to two pairs of curves 
each on $\wt S_2$, but $b_2$ lifts to a pair of paths since it is cut by $z$.
Let $\wt c := \wt b_2 \wt b_2'$ be the concatenation of these two paths, 
and let the other lifts be as labeled. 
Write $\wt a = \{\wt a_1, \wt a'_1, \wt a_2, \wt a'_2\}$ 
and $\wt b = \{\wt b_1, \wt b'_1, \wt c\}$.
This yields an $\mc A_7$ configuration on $\wt S_2$. 
$z$ is the same curve that in the earlier pictures of the $\mc A_4$ cell decomposition
was labeled $z$. If one cuts that picture along $z$, clones, and glues, 
one gets the earlier picture of the $\mc A_7$ decomposition,
the curves corresponding to the cut curve $z$ there 
being labeled $\wt z$ and $\wt z'$. 

The lengths of curves the corresponding flat structures, however, are not the same, 
(this shows up in the noncommensurability of the different $J$)
and we have seen therefore that none of the pseudo-Anosovs in $G(a,b)$ lift
to elements of $G(\wt a,\wt b)$. 
We can find some compositions of Dehn twists in curves of $a$ and $b$ 
that do lift to $G(\wt a,\wt b)$, but these will not be elements of $G(a,b)$. 
Later, we will find a different cover and multicurves $\wt a,\wt b$ 
such that elements of $G(a,b)$ do lift to elements 
of $G(\wt a, \wt b)$. 
As far as the present cover is concerned,
we claim the Dehn multitwist $T_{\wt a_1} T_{\wt a'_1}$ covers the twist $T_{a_1}$. 

\begin{lemma}
Given an $n$-fold covering of surfaces $p \colon \wt S \to S$, 
if a curve $a$ in $S$ lifts to $n$ curves $\wt a^{(1)},\ldots, \wt a^{(n)}$ in $\wt S$, 
then $p\colon \prod_{j=1}^n T_{\wt a^{(j)}} \to T_a$.
\end{lemma}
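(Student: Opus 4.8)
The plan is to produce explicit representatives realizing the covering relation, exploiting that a Dehn twist is supported in an annulus. First I would fix an embedded closed annular neighborhood $A$ of $a$ in $S$. Since $a$ has exactly $n$ lifts while $p$ has $n$ sheets, each $\widetilde a^{(j)}$ maps homeomorphically onto $a$; hence, taking $A$ thin enough, $p^{-1}(A) = \bigsqcup_{j=1}^{n} \widetilde A^{(j)}$, where $\widetilde A^{(j)}$ is an annular neighborhood of $\widetilde a^{(j)}$ and $p|_{\widetilde A^{(j)}}\colon \widetilde A^{(j)} \to A$ is a homeomorphism. This reduction --- that the preimage of a sufficiently thin annular neighborhood of $a$ is precisely $n$ disjoint homeomorphic copies --- is the only place the hypothesis ``$a$ lifts to $n$ curves'' is used, and is where I would be most careful; it follows from covering each compact circle $\widetilde a^{(j)}$ by evenly covered sets together with a tube-lemma argument in the normal direction.

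Next, take the standard representative $f$ of $T_a$ supported in the interior of $A$, so that $f$ restricts to the identity on $\partial A$ and on $S \setminus A$. Define $\widetilde f$ on each $\widetilde A^{(j)}$ by $\widetilde f := (p|_{\widetilde A^{(j)}})^{-1}\circ f \circ p|_{\widetilde A^{(j)}}$, and extend $\widetilde f$ by the identity over $\widetilde S \setminus p^{-1}(A)$; the result is a well-defined homeomorphism of $\widetilde S$ because $\widetilde f$ is the identity on each $\partial \widetilde A^{(j)}$. By construction $\widetilde f|_{\widetilde A^{(j)}}$ is a Dehn twist about $\widetilde a^{(j)}$ supported in $\widetilde A^{(j)}$, of the same handedness as $T_a$ when $p$ preserves orientation, so $\widetilde f$ represents $\prod_{j=1}^{n} T_{\widetilde a^{(j)}}$; this product is unambiguous since the $\widetilde a^{(j)}$ are pairwise disjoint and their twists commute. (If $p$ reverses orientation one instead gets $\prod_{j} T_{\widetilde a^{(j)}}^{-1}$ covering $T_a^{-1}$; I would record this and otherwise assume compatible orientations, as holds in the applications.)

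Finally I would check the intertwining relation $p \circ \widetilde f = f \circ p$ pointwise: on each $\widetilde A^{(j)}$ it is immediate from $\widetilde f = (p|_{\widetilde A^{(j)}})^{-1}\circ f \circ p|_{\widetilde A^{(j)}}$, and on $\widetilde S \setminus p^{-1}(A)$ both sides equal $p$, since there $\widetilde f = \id$ and $f$ is the identity off $A$. Thus $\widetilde f$ and $f$ are representatives witnessing $p_{\#}\bigl(\prod_{j=1}^{n} T_{\widetilde a^{(j)}}\bigr) = T_a$, that is, $p\colon \prod_{j=1}^{n} T_{\widetilde a^{(j)}} \to T_a$. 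No genuinely hard step appears; the only points demanding care are the ``preimage of a thin annulus is $n$ annuli'' reduction and the orientation bookkeeping that pins down the handedness of the lifted twists, both isolated above.
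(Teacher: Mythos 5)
Your proposal is correct and follows essentially the same approach as the paper: support $T_a$ in a thin annular neighborhood $A$ whose preimage consists of $n$ disjoint annular neighborhoods of the $\widetilde a^{(j)}$, lift the twist by transporting it through the local homeomorphisms $p|_{\widetilde A^{(j)}}$, extend by the identity, and check the intertwining relation on the annuli and on the complement separately. The only cosmetic difference is that you phrase the lifted twist as the conjugate $(p|_{\widetilde A^{(j)}})^{-1}\circ f\circ p|_{\widetilde A^{(j)}}$ where the paper writes the same map in explicit $(t,\theta)$ coordinates pulled back from $A$.
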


\begin{proof}
Consider a small annular neighborhood $A(a)$ of $a$, 
and give $A(a) \homeo [0,1] \x S^1$ coordinates $(t,\theta \pmod {2\pi})$. 
The inverse image $p^{-1} A$ is a disjoint union of $n$ annuli 
$A(\wt a^{(j)})$, each 
homeomorphic to $A(a)$, around $\wt a^{(j)}$.
We can assume the twist $T_{a}$ to be supported on $A(a)$, 
where can be written in local coordinates  
as $(t,\theta) \mapsto (t, \theta + 2\pi t)$.
We can pull back these coordinates to coordinates 
$(\wt t^{(j)}, \wt \theta^{(j)})$ on $A(\wt a^{(j)})$
and $T_{\wt a^{(j)}}$ can be taken to be supported on these  
annuli with local formulas 
$(\wt t^{(j)},\wt \theta^{(j)}) \mapsto (\wt t^{(j)}, \wt \theta^{(j)} + 2\pi \wt t^{(j)})$.
Now if $(\wt t^{(j)},\wt \theta^{(j)}) \in A(\wt a^{(j)})$ 
we have \[p\prod_k T_{\wt a^{(k)}}(\wt t^{(j)},\wt \theta^{(j)}) = 
p T_{\wt a^{(j)}} (\wt t^{(j)},\wt \theta^{(j)}) = 
p (\wt t^{(j)}, \wt \theta^{(j)} + 2\pi \wt t^{(j)}) = 
(t, \theta + 2\pi t) =
T_{a}(t,\theta) = 
T_{a}p (\wt t^{(j)},\wt \theta^{(j)}),\]
so $p\colon \prod_k T_{\wt a^{(k)}} | A(\wt a^{(j)}) \to T_{a} | A(a).$
On the complement  $p\-(S \less A(a))$ of the lifted annuli, 
the multitwist $\prod_j T_{\wt a^{(j)}}$ is the identity, 
as is $T_a$ on $S \less A(a)$,
so we have
$p \prod_j T_{\wt a^{(j)}} = p = T_{a} p$ there as well.
Thus $\prod_j T_{\wt a^{(j)}}$ covers $T_{a}$.
\end{proof}

The same argument shows $T_{\wt a'_2} T_{\wt a_2}$ covers $T_{a_2}$ 
and  $T_{\wt b'_1} T_{\wt b_1}$ covers $T_{b_1}$. 
Only a little subtler is that $T_{\wt c}$ covers $T_{b_2}^2$. 

\begin{lemma}
Given an $n$-fold covering of surfaces $p \colon \wt S \to S$, 
suppose 
a curve $a$ in $S$ lifts to $m$ curves $\wt a^{(1)},\ldots, \wt a^{(m)}$ in $\wt S$, 
with $p | a^{(j)}$ an $n_j$-to-$1$ map. 
Let $L = \lcm \{n_j : j = 1, \ldots, m\}$.
Then $p\colon \prod_{j=1}^m T_{\wt a^{(j)}}^{L/n_j} \to T_a^L$.
\end{lemma}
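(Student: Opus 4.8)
The plan is to reduce this statement to the previous lemma by finding a suitable intermediate cover. First I would observe that the key geometric fact is that the $n_j$-to-$1$ restriction $p|_{\wt a^{(j)}} \colon \wt a^{(j)} \to a$ means that a single right-handed Dehn twist $T_{\wt a^{(j)}}$, supported on an annular neighborhood $A(\wt a^{(j)})$ that maps to $A(a)$ by an $n_j$-fold covering of annuli, descends under $p$ not to $T_a$ but to $T_a^{n_j}$: in the local coordinates $(t,\theta)$ on $A(a)$ with $\theta$ running mod $2\pi$, the annulus $A(\wt a^{(j)})$ has coordinates $(\wt t,\wt \theta)$ with $\wt\theta$ running mod $2\pi n_j$ and the covering is $(\wt t,\wt\theta)\mapsto (\wt t, \wt\theta \bmod 2\pi)$, while $T_{\wt a^{(j)}}$ is $(\wt t,\wt\theta)\mapsto(\wt t, \wt\theta + 2\pi n_j \wt t)$ if we normalize so the twist goes once around $A(\wt a^{(j)})$; composing with $p$ gives $(t,\theta)\mapsto(t,\theta+2\pi n_j t)$, which is $T_a^{n_j}$. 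Hence $T_{\wt a^{(j)}}^{L/n_j}$ descends to $T_a^{L}$ on that annulus.

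The second step is to check that these descended maps agree on overlaps, which they do vacuously: the annuli $A(\wt a^{(1)}),\ldots,A(\wt a^{(m)})$ are pairwise disjoint (they are the components of $p^{-1}A(a)$), and on the complement $\wt S \setminus \bigcup_j A(\wt a^{(j)}) = p^{-1}(S\setminus A(a))$ every $T_{\wt a^{(j)}}^{L/n_j}$ is the identity, while $T_a^L$ is the identity on $S\setminus A(a)$; so $p\circ \prod_j T_{\wt a^{(j)}}^{L/n_j} = p = T_a^L \circ p$ there. Combining with the annular computation, $p\circ\prod_j T_{\wt a^{(j)}}^{L/n_j} = T_a^L\circ p$ everywhere, which is exactly the assertion $p\colon \prod_{j=1}^m T_{\wt a^{(j)}}^{L/n_j}\to T_a^L$.

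Alternatively, and perhaps cleaner to write, I would deduce this from Lemma 5 (the case $n_j=1$, where the single curve lifts to $n$ curves each mapping homeomorphically) applied not to $p$ directly but to an auxiliary cover: pass to the intermediate cover $\wh S\to S$ corresponding to the $\theta$-direction, or more concretely note that $T_a^{n_j}$ restricted to $A(a)$ is the Dehn twist along $a$ in a cylinder of ``$n_j$ times the circumference,'' so it lifts through the $n_j$-fold annular cover $A(\wt a^{(j)})\to A(a)$ to $T_{\wt a^{(j)}}$, hence $T_a^{L}$ lifts to $T_{\wt a^{(j)}}^{L/n_j}$ there. The main obstacle is bookkeeping the normalizations: one must be careful that ``the Dehn twist'' on each annulus is the primitive one (going around exactly once) so that the exponents $L/n_j$ come out right, and that the orientation conventions (right-handed twists) are consistent between $\wt S$ and $S$ under $p$; once the coordinates are fixed as above there is no real difficulty, so this lemma is essentially a refinement of the previous one with the exponents tracking ramification.
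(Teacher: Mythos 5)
Your proof is correct and follows essentially the same route as the paper's: put annular coordinates $(t,\theta)$ on $A(a)$, observe that $A(\wt a^{(j)})$ is the $n_j$-fold cyclic annular cover with $\wt\theta$ running mod $2\pi n_j$, compute that the primitive twist $T_{\wt a^{(j)}}$ pushes down to $T_a^{n_j}$, raise to the power $L/n_j$, and patch over the complement of the annuli where everything is the identity. The alternative reduction you sketch is really the same computation repackaged, so there is no substantive difference from the paper's argument.
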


\begin{proof}
As before, we can take the maps to be the identity off of preselected annuli. 
Take $A(a)$ to be an annulus around $a$ 
and $A(\wt a^{(j)})$ to be the component of $p\- A(a)$ containing $\wt a^{(j)}$.
Restricting to these annuli, it will be enough to show that 
$p T_{\wt a^{(j)}} = T_a^{n_j} p$ on $A(\wt a^{(j)})$.
If $a$ has coordinates $(t,\theta) \in [0,1] \x [0,2\pi)$ as before,
the lift $A(a^{(j)})$ is $n_j$ rectangular neighborhoods with these coordinates, 
attached end to end, so they 
naturally carry coordinates $(\wt t, \wt \theta) \in [0,1] \x [0,2 n_j\pi)$. 
The point $(t,\theta) \in A(b_2)$ has the $n_j$ lifts 
$(t, \theta + k\pi)$ upstairs, $0 \leq k < n_j$ 
so the $n_j$-fold covering $p | A(\wt a^{(j)})$ can 
be represented by taking the second coordinate modulo $2\pi$.
Now we can take $T_{\wt a^{(j)}}(\wt t, \wt \theta) = (t, \wt \theta + n_j\pi \wt t)$, 
and projecting down, 
\[pT_{\wt a^{(j)}} (\wt t, \wt \theta) = (t, \theta + n_j\pi t \pmod{2\pi})
= T_{a}^{n_j} p(\wt t, \wt \theta).\]
\end{proof} 

Now using that $p_\#$ is a homomorphism, 
we see $p \colon T_{\wt a} \to T_a$ and $p \colon T_{\wt b} \to T_{b_1} T_{b_2}^2$. 
Thus we have an element-by-element covering 
$p\colon G(\wt a,\wt b) \to \ang{T_a, T_{b_1} T_{b_2}^2}$. 
However, the latter group intersects $G(a,b)$ in $\ang{T_a}$, 
so they share no pseudo-Anosovs.

We now see why in this example elements of $G(a,b)$ don't lift to elements of 
$G(\wt a,\wt b)$: the problem is exactly that the exponents of $T_{b_1}$ 
and $T_{b_2}$ are not the same in the image. In order to fix this, 
we should arrange that $b_1$ doesn't lift. 
So consider the two-fold
covering below.

\begin{figure}[H]
\includegraphics{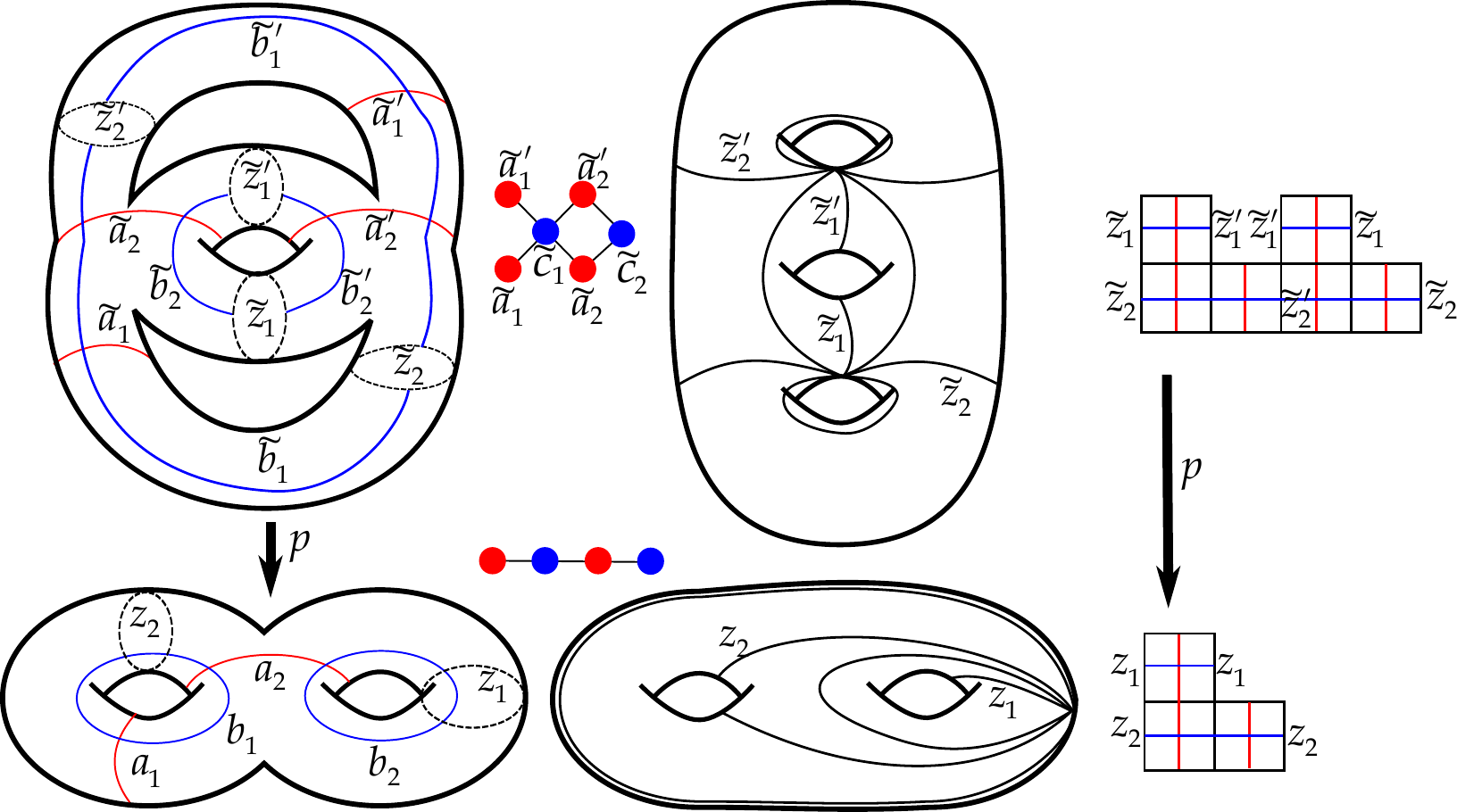}
\caption{A ``better'' two-fold cover}
\end{figure}

The $\mc A_4$ curve pattern downstairs is the same, 
but now we are cutting along a multicurve $z$ that meets both $b_1$ and $b_2$, 
so each is lifted to a pair of paths. Join the lifts of $b_1$ to form a curve $\wt c_1$ 
double-covering $b_1$ and similarly form a curve $\wt c_2$ double-covering $b_2$. 
Now $p_\#(T_{\wt c_j}) = T_{b_j}^2$ for $j = 1,2$, 
so if we let $\wt a$ be the set of lifts of $a_1,a_2$ and $\wt c = \{\wt c_1, \wt c_2\}$, 
then $p_\#(T_{\wt a}) = T_a$ and $p_\#(T_{\wt c}) = T_b^2$, 
so $p_\#\colon G(\wt a, \wt c) \to \ang{T_a,T_b^2} < G(a,b)$.

There is nothing special about double covers in this example; forming an $n$-fold 
cover of $S$ by cutting along $z$ and gluing $n$ copies cyclically gives 
a configuration $\wt a^{(n)}, \wt c^{(n)}$ of curves in the cover $\wt S_n$ such that
$p_\#\colon G(\wt a^{(n)},\wt c^{(n)}) \to \ang{T_a,T_b^n} < G(a,b)$.

In this way we get an infinite family of cyclic covers $p_n\colon \wt S_n \to S$; 
moreover, the cover $\wt S_{mn} \to S$ always factors both through $\wt S_m$ 
and $\wt S_n$: 
\xymatrix{
   & \wt S_{mn} \ar[dl] \ar[dr] & \\
    \wt S_m \ar[dr] & & \wt S_n \ar[dl] \\
    & S. &}\\
The curve systems are compatible with the coverings, so that 
the group $G(\wt a^{(mn)}, \wt c^{(mn)})$ in $\MCG(\wt S_{mn})$
covers the subgroup $\ang{T_{\wt a^{(n)}}, T_{\wt c^{(n)}}^m}$ 
of $G(\wt a^{(n)}, \wt c^{(n)}) < \MCG(\wt S_n)$ 
under the covering map $\wt S_{mn} \to \wt S_n$.
This shows that the elements in the group 
$\ang{T_{\wt a^{(n)}},T_{\wt c^{(n)}} ^m} < \MCG(\wt S_n)$ 
are all commensurable with elements of the group
$\ang{T_{\wt a^{(m)}},T_{\wt c^{(m)}} ^n}$ of $\MCG(\wt S_m)$.

The transpose of the incidence matrix for the upstairs configuration is 
$N^\top = \mat{1 & \cdots & 1 & 1 & \cdots & 1\\
	                  0 & \cdots & 0 & 1 & \cdots & 1}$, 
so that $N^\top N = \mat{2n&n\\n&n} = n \mat{2 & 1\\1&1}$ 
has characteristic polynomial $x^2 - 3nx + n^2$ and Perron--Frobenius eigenvalue 
$n\frac{3 + \sqrt 5}{2} = n\mu$. 
Thus in the derivative representation $G(\wt a^{(n)}, \wt c^{(n)}) \to \PSL(2,\R)$ 
we have $T_{\wt a^{(n)}} \mapsto \mat{1 & 0 \\ -\sqrt{n\mu} & 1}$ 
and
$T_{\wt c^{(n)}} \mapsto \mat{1 & \sqrt{n\mu} \\ 0 & 1}$.

The ``natural choice'' of width assignment for rectangles upstairs is 
to glue together $n$ copies of the flat structure we've chosen for the surface below:
namely, to give lifts of $a_1$ the width $1$ 
and lifts of $a_2$ the width $\g$, the golden ratio, 
$c^{(n)}_1$ the width $\g$, and
$c^{(n)}_2$ the width $1$.
If we take derivatives with respect to this flat structure, though,
we get $T_{\wt a^{(n)}} \mapsto \mat{1 & 0 \\ -\sqrt \mu & 1}$ 
and $T_{\wt c^{(n)}} \mapsto \mat{1 & n\sqrt{\mu} \\ 0 & 1}$, 
which factors through the representation $G(a,b) \to \PSL(2,\R)$, 
but is a different representation from the one in the last paragraph.
This difference demonstrates the ambiguity 
in choosing flat structures we discussed in Section 4. 
While the choice of eigenvectors for $NN^\top$ and $N^\top N$ 
we have made in Section 3 gives the most natural representation, 
it does not factor through covering maps $p_\#$.

It is now clear how to generalize this example.

\begin{theorem}
Let an $n$-fold covering of surfaces $p \colon \wt S \to S$ be given.
Suppose $a$ and $b$ are multicurves filling $S$, 
with components $a_1,\ldots,a_l$ and $b_1,\ldots,b_m$.
Suppose $a_j$ has lifts $\wt a_j^{(k)}$ with $p | \wt a_j^{(k)}$ an $n_{j,k}$-to-$1$ 
map and $b_j$ has lifts $\wt b_j^{(k)}$ with $p | \wt b_j^{(k)}$ an $n'_{j,k}$-to-$1$ 
map. 
Let $L_j = \lcm \{n_{j,k} : k\}$ and $L'_j = \lcm \{ n'_{j,k}: k\}$ for each $j$, 
and further suppose $L = L_j$ and $L' = L'_j$ are independent of $j$.
Then $p\colon \prod_{j,k} T_{\wt a_j^{(k)}}^{L/n_{j,k}} \to T_a^L$ 
and $p\colon \prod_{j,k} T_{\wt b_j^{(k)}}^{L'/n'_{j,k}} \to T_b^{L'}$. 
In particular, if $L = n_{j,k}$ and $L' = n'_{j,k}$ are independent of $j,k$, 
then letting $\wt a = \{\wt a_j^{(k)}\}_{j,k}$ and $\wt b = \{\wt b_j^{(k)}\}_{j,k}$
we have a covering 
$p_\#\colon \ang{T_{\wt a},T_{\wt b}} \to \ang{T_a^L,T_b^{L'}}$.\end{theorem}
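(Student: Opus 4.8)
The plan is to bootstrap from the preceding lemma, which handles a single curve, using nothing beyond the fact recorded in Section~\ref{begin} that the covering correspondence $p_\#$ is a homomorphism on the subgroup $H_p$ of classes that cover something. First I would fix \emph{disjoint} annular neighbourhoods $A(a_1),\ldots,A(a_l)$ of the components of $a$, which is possible since the $a_j$ are pairwise disjoint, and observe that the components of $p^{-1}\bigl(A(a_j)\bigr)$ are exactly the annuli $A(\wt a_j^{(k)})$; running over all $j$ and $k$ these annuli are pairwise disjoint, so all the lifted twists $T_{\wt a_j^{(k)}}$ are supported on disjoint sets and hence pairwise commute. In particular the double product $\prod_{j,k} T_{\wt a_j^{(k)}}^{L/n_{j,k}}$ is well defined independently of the ordering.

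Next I would apply the preceding lemma to each component $a_j$ separately. With its lifts $\wt a_j^{(k)}$ and the covering degrees $n_{j,k}$, and using the hypothesis $L_j = L$, that lemma yields
\[ p\colon \prod_k T_{\wt a_j^{(k)}}^{L/n_{j,k}} \longrightarrow T_{a_j}^{L} \qquad (j = 1,\ldots,l). \]
Multiplying these $l$ relations together and invoking the homomorphism property of $p_\#$ gives $p\colon \prod_{j,k} T_{\wt a_j^{(k)}}^{L/n_{j,k}} \to \prod_j T_{a_j}^{L}$. Since the $a_j$ are disjoint, $T_a = \prod_j T_{a_j}$ with commuting factors, so $\prod_j T_{a_j}^{L} = T_a^{L}$, which is the first asserted covering. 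The statement for $b$ is identical, working with disjoint annuli around the $b_j$ and using $L'_j = L'$.

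For the final sentence I would specialize to $L = n_{j,k}$ and $L' = n'_{j,k}$ for all $j,k$. Then every exponent $L/n_{j,k}$ and $L'/n'_{j,k}$ equals $1$, so $\prod_{j,k} T_{\wt a_j^{(k)}}$ is literally the Dehn multitwist $T_{\wt a}$ about $\wt a = \{\wt a_j^{(k)}\}_{j,k}$ and $\prod_{j,k} T_{\wt b_j^{(k)}} = T_{\wt b}$; the relations just proved become $p\colon T_{\wt a} \to T_a^{L}$ and $p\colon T_{\wt b} \to T_b^{L'}$. Hence $T_{\wt a}, T_{\wt b} \in H_p$, so $\ang{T_{\wt a}, T_{\wt b}} \leq H_p$, and the restriction of the homomorphism $p_\#$ to this subgroup has image $\ang{p_\#(T_{\wt a}), p_\#(T_{\wt b})} = \ang{T_a^{L}, T_b^{L'}}$, which is exactly the covering $p_\#\colon \ang{T_{\wt a}, T_{\wt b}} \to \ang{T_a^{L}, T_b^{L'}}$.

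I do not expect a serious obstacle: the analytic content is entirely packaged in the preceding lemma, and the filling hypothesis on $a \cup b$ is not used in the argument at all — it is present only so that the conclusion sits inside the world of two-multitwist groups. The one point that genuinely needs care is the commutativity bookkeeping: one must choose the downstairs annuli $A(a_j)$ pairwise disjoint so that all lifted annuli, hence all lifted twists, are disjointly supported, which is precisely what makes the multiproduct unambiguous and lets the homomorphism property of $p_\#$ be applied termwise.
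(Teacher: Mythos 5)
Your proposal is correct and takes the same approach the paper intends: the paper's proof is the one-liner ``This follows from restricting to annuli and applying the preceding two lemmas,'' and you have simply filled in the routine bookkeeping (disjoint annuli downstairs, hence disjoint lifted annuli and commuting lifted twists, then apply the single-curve lemma componentwise and multiply using the homomorphism property of $p_\#$). Your observation that the filling hypothesis plays no logical role in the covering relation is a nice aside, though not needed.
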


\begin{proof}
This follows from restricting to annuli and applying the preceding two lemmas.
\end{proof}

\nd Example. In the picture below are two covers of the $\mc A_4$ configuration.
The one on the right is from before, but the one on the left is new; 
one can get it by cutting along the curves indicated. 
They have a common double cover, a $\mathbb{Z}/2\mathbb{Z} \x \mathbb{Z}/2\mathbb{Z}$ cover of $\mc A_4$.

\begin{figure}[H]
\includegraphics{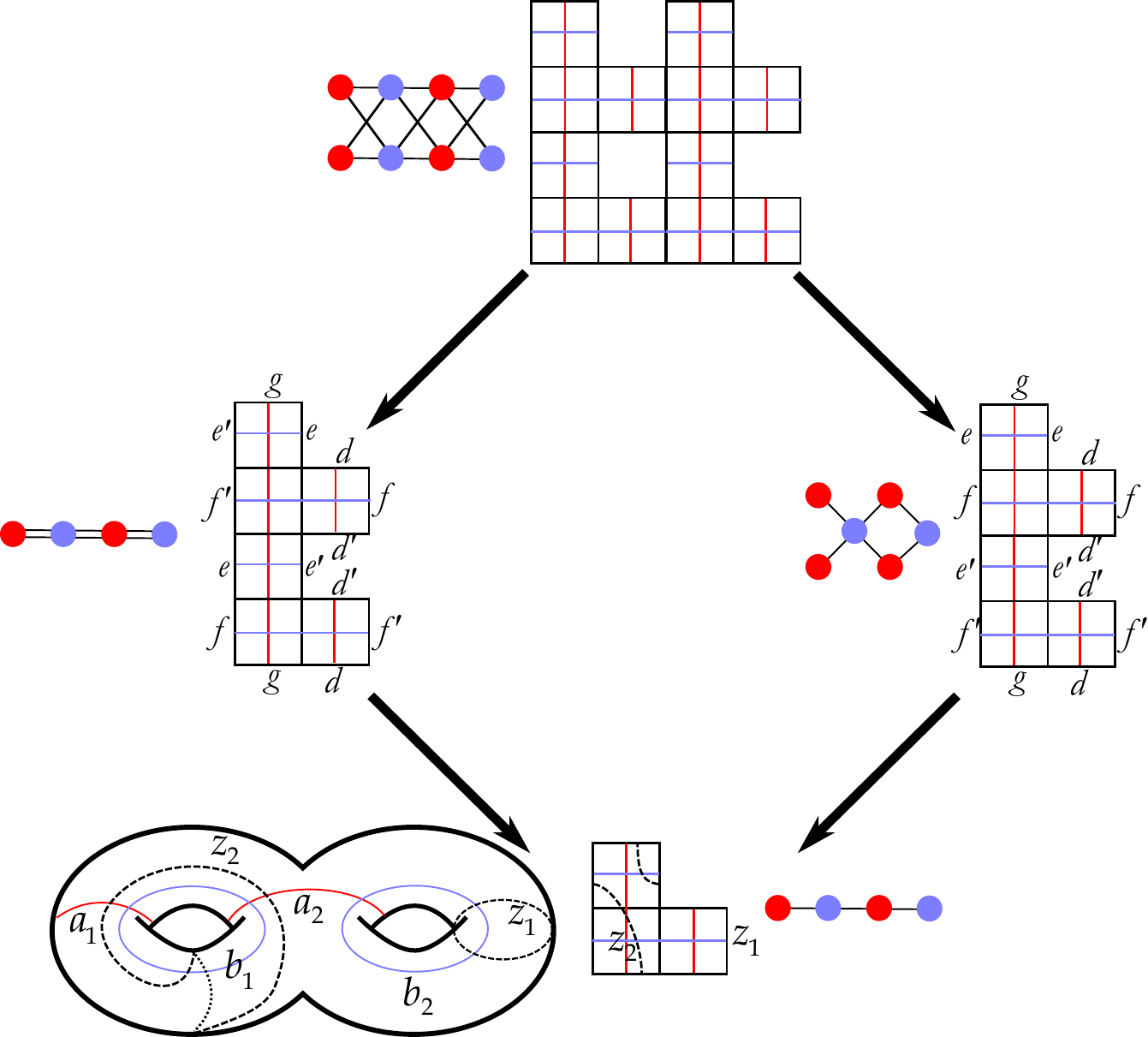}
\caption{A pair of commensurable groups}
\end{figure}
\bigskip

The red curves lift on both sides. 
The blue curves on the left lift, while those on the right are cut when we  
produce the double cover. 
If $a,b$ are the red and blue multicurves on the left and $a',b'$ 
are the red and blue multicurves on the right, respectively, 
then we have that elements of $\ang{T_a,T_b}$ are commensurable 
with elements of $\ang{T_{a'},T_{b'}^2}$, 
in the sense that the homomorphism defined by 
$T_a \mapsto T_{a'}$ and $T_b \mapsto T_{b'}^2$ takes elements to 
commensurable elements.

A little reflection shows that we never have a full element-by-element group-covering 
$G(\wt a,\wt b) \to G(a,b)$ when $G(a,b)$ contains 
pseudo-Anosovs and the covering 
surface is connected; 
the process of forming an $n$-fold covering of a surface $S$ involves cutting along 
some curves (and/or arcs between boundary components) $z$, 
cloning the cut surface $n$ times, 
and gluing the copies of the boundary components together. 
Since the covering surface is assumed connected, the permutation of $\{1,\ldots,n\}$ 
induced by the gluing corresponding to some component $z_1$ of $z$ must 
not be the identity.
If $G(a,b)$ contains pseudo-Anosovs, $a \cup b$ fills $S$, and so at least one curve,
say $a_1$, must intersect $z_1$. 
Then $a_1$ lifts to a collection of paths, which concatenate together to 
a collection $\wt a_1$ of curves, 
at least one of which multiply covers $a_1$. 
If these multiplicities are the same number $m$ for all the curves covering $a_1$, 
then $T_{\wt a_1}$ covers $T_a^m$; 
if the multiplicities are different, no automorphism of the covering surface covers 
a power of $T_{\wt a_1}$. Either way, not all of $G(a,b)$ can be covered.

\newpage

\end{document}